\numberwithin{equation}{section}
\newcommand {\h}{\mathbb{H}}
\newcommand {\Z}{\mathbb{Z}}
\newcommand {\R}{\mathbb{R}}
\newcommand {\C}{\mathbb{C}}
\newcommand{\alg}[1]{\mathfrak{#1}}
\newcommand{\affine}[1]{\widehat{#1}}
\newcommand{\SLA}[2]{\alg{#1} \left( #2 \right)}
\newcommand{\SLSA}[3]{\alg{#1} \left( #2 \middle\vert #3 \right)}
\newcommand{\AKMA}[2]{\affine{\alg{#1}} \left( #2 \right)}
\newcommand{\AKMSA}[3]{\affine{\alg{#1}} \left( #2 \middle\vert #3 \right)}
\newcommand{\TypMod}[1]{\mathcal{T}_{#1}}
\newcommand{\AtypMod}[1]{\mathcal{A}_{#1}}
\newcommand{\eop}{\hspace*{\fill} $\Box$}
\newtheorem{theorem}{Theorem}
\newtheorem{lemma}[theorem]{Lemma}
\newtheorem{proposition}[theorem]{Proposition}
\newtheorem{corollary}[theorem]{Corollary}
\newtheorem{conjecture}[theorem]{Conjecture}
\newtheorem{remark}[theorem]{Remark}
\newtheorem{definition}[theorem]{Definition}
\newenvironment{proof}[1][Proof]{\begin{trivlist}
\item[\hskip \labelsep {\bfseries #1}]}{\end{trivlist}}
\newcommand{\qed}{\nobreak \ifvmode \relax \else
      \ifdim\lastskip<1.5em \hskip-\lastskip
      \hskip1.5em plus0em minus0.5em \fi \nobreak
      \vrule height0.75em width0.5em depth0.25em\fi}
\begin{document}
\title{\bf The Mock Modular Data of a Family of Superalgebras}

\author{Claudia Alfes\footnote{alfes@mathematik.tu-darmstadt.de}\ \ and\ Thomas Creutzig\footnote{tcreutzig@mathematik.tu-darmstadt.de}\\[3mm]
Fachbereich Mathematik, \\ Technische Universit\"at Darmstadt, \\ 64289 Darmstadt, Germany} \maketitle

\abstract{The modular properties of characters of representations of a family of W-superalgebras extending $\AKMSA{gl}{1}{1}$ are considered.
Modules fall into two classes, the generic type and the non-generic one.
Characters of non-generic modules are expressed in terms of higher-level Appell-Lerch sums. We compute the modular transformations
of characters and interpret the Mordell integral as an integral over characters of generic representations. 
The $\C$-span of a finite number of non-generic characters together with an uncountable set of characters of the generic type combine into a representation
of $SL(2;\Z)$. The modular transformations are then used to define a product on the space of characters. 
The fusion rules of the extended algebras are partially inherited from the known fusion rules for modules of $\AKMSA{gl}{1}{1}$. Moreover, the product obtained from the modular transformations coincides with
the product of the Grothendieck ring of characters if and only if the fusion multiplicities are at most one.
}

\section{Introduction}

Automorphic forms, representation theory and conformal field theory are closely related areas since characters of modules of conformal field theories possess modular properties. 
Important examples are representations of affine Lie algebras and their characters as well
as lattice vertex operator algebras whose characters involve the theta function of the lattice. A crucial ingredient of the algebraic structure of (rational) conformal field theories is the fusion ring of modules, which is related to the modular transformations of characters via the Verlinde formula.

Here, we consider a family of W-superalgebras extending $\AKMSA{gl}{1}{1}$ which is a supersymmetric analogue of $\AKMA{gl}{1}$ and rank one lattice vertex algebras. To illustrate the main ideas of this construction, the investigation of modularity, and the Verlinde formula we start by presenting the well-known example of rank one lattice vertex algebras.
For an introduction to conformal field theory and a detailed exposition of this example see \cite{G}.
The affine vertex algebra of $\SLA{gl}{1}$ or rank one Heisenberg algebra has the inverse of Dedekind's $\eta$-function as its character. 
This algebra admits a family of extensions to larger conformal field theories as follows. 
Irreducible modules $V_\alpha$ of highest-weight $\alpha\in\R$ possess rather trivial fusion products, namely
\begin{equation}\label{eq:fusionex}
V_\alpha\times_{f} V_\beta\ = \ V_{\alpha+\beta}.
\end{equation}
This implies that the module
\begin{equation*}
M_\alpha \ = \ \bigoplus_{m\in\Z}V_{m\alpha}
\end{equation*}
is closed under the fusion product. Let $\alpha^2\in\mathbb Z$, then such a module extends the rank one Heisenberg algebra to a larger conformal field theory. 
Important examples of these extensions are the affine vertex algebra of $\SLA{sl}{2}$ at level one ($\alpha=\sqrt{2}$), and the
super Virasoro algebra at central charge one ($\alpha=\sqrt{3}$).
The character of the module $V_\alpha$ is
\begin{equation*}
\chi_{V_{\alpha}}(u;\tau)\ = \ \frac{e^{2\pi i \alpha u}e^{\pi i \tau \alpha^2}}{\eta(\tau)}\, ,
\end{equation*}
where $\tau\in\mathbb{H}$, $u\in\C$.
Hence, the character of the module $M_\alpha$ is 
\begin{equation*}
\chi_{M_{\alpha}}(u;\tau)\ = \ \frac{1}{\eta(\tau)}\sum_{m\in\Z}e^{2\pi i \alpha m u}e^{\pi i \tau \alpha^2 m^2}\ = \ \frac{\theta_3(\alpha u;\alpha^2\tau)}{\eta(\tau)}\, .
\end{equation*}
If $\alpha$ is the square root of a positive even integer, these characters have particularly good modular properties. 
The extended algebra is the lattice $\alpha\Z$.
The modules 
\begin{equation*}
M_{\alpha,n}\ = \ \bigoplus_{m\in\Z}V_{m\alpha+n/\alpha}
\end{equation*}
for $n=0,...,\alpha^2-1$ close under the fusion product and its characters form
a representation of the modular group $\text{SL}(2;\Z)$.

Here we investigate a supersymmetric analogue of this simple example.
Consider the affine Lie superalgebra $\AKMSA{gl}{1}{1}$. The character of its affine vertex superalgebra is
\begin{align*}
 \chi_{\AKMSA{gl}{1}{1}}(u,v;\tau) \ &= \ i \frac{e^{i\pi u}}{1-e^{2\pi i u}}\frac{1}{\eta^3(\tau)} \left(-i \sum_{n\in\Z} (-1)^n e^{\pi i (n+\frac12)^2 \tau+2\pi i u (n+\frac12)}\right) 
\\
&= \ i \frac{e^{i\pi u}}{1-e^{2\pi i u}}\frac{\theta_1(u;\tau)}{\eta^3(\tau)}\, ,
\end{align*}
where $\tau\in\mathbb{H}$, $u,v\in\C$. Verma modules of this algebra are generically irreducible. The non-generic case is called atypical and gives rise to interesting extended algebras \cite{CR1}.
Atypical modules are labeled by a pair $(n,\ell)$, where $\ell$ is an integer and $n$ a real number.
Their characters are, for example if $\ell>0$,
\begin{equation*}
\begin{split}
\chi_{\AtypMod{n,\ell}}(u,v;\tau) \ &= \ (-1)^{\ell+1} i e^{2\pi iv\ell} \frac{e^{\pi i(2nu+(2n\ell+\ell^2)\tau)}}{1-e^{2\pi i(un+\ell\tau)}}\frac{\theta_1(u;\tau)}{\eta^3(\tau)}\, .
\end{split}
\end{equation*}
The fusion product of atypical modules is not complicated \cite{CR1}. 
As in the $\AKMA{gl}{1}$ case, there exist sets of modules that close under fusion and thus combine to an extended algebra.
We are considering a class that is labeled by two integers $n,\ell$, such that $\ell>0$, $n=0$ mod $\ell$ and $n\ell+\ell^2>0$. We call these extended algebras $\mathfrak W_{n,\ell}$.
The most interesting family in this class is obtained by setting $\ell=1$ and $n=0,1,2,...$\ . In \cite{CR2} it was shown that the operator product algebra of an even subalgebra of $\mathfrak W_{n,\ell}$ agrees with
the operator product algebra of the Feigin-Semikhatov $\mathbf W^{(2)}_{2n+1}$ algebra at level $-2(n-1)(2n+1)/(2n-1)$ \cite{FS}.
The character of such an extended algebra is 
\begin{equation*}
\chi_{\mathfrak W_{n,\ell}}(u,v;\tau)\ = \ i\frac{\theta_1(u;\tau)}{\eta^3(\tau)}\sum_{m\in\Z}\frac{e^{\pi i (m\ell+(2mn+1)u+2mv+m\ell(m\ell+2mn+1)\tau)}}{1-e^{2\pi i (u+m\ell\tau)}}\, .
\end{equation*}
This character can be expressed in terms of an higher-level Appell-Lerch sum.
Using the modularity of these sums we find analogously to $\AKMA{gl}{1}$ and rank one lattice vertex algebras
\begin{theorem}
Let $n,\ell\in \Z$, such that $\ell>0$, $n\equiv0\pmod{\ell}$ and $n\ell+\ell^2>0$. Then there exists a set of modules of the algebra $\mathfrak W_{n,\ell}$, such that the $\C$-span of its characters is a representation of the modular group $\text{SL}(2;\Z)$. 
\end{theorem}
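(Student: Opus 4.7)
The plan is to reduce the statement to the known modular theory of higher-level Appell-Lerch sums in the sense of Zwegers. First I would rewrite the character $\chi_{\mathfrak W_{n,\ell}}$ by extracting the Jacobi-like prefactor $i\theta_1(u;\tau)/\eta^3(\tau)$ and identifying the sum over $m$ as a higher-level Appell-Lerch function (up to rescaling and shifting of the elliptic variables $u,v$ by factors involving $n$ and $\ell$). The conditions $\ell>0$, $\ell\mid n$, and $n\ell+\ell^2>0$ should be exactly what is needed to ensure the level of the Appell sum is a positive integer and that the $\tau$-dependent quadratic exponent has the correct sign for convergence.

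Next I would recall that a level-$k$ Appell-Lerch sum is a mock Jacobi form: it fails to transform as a Jacobi form under $SL(2;\Z)$, but it admits a non-holomorphic completion obtained by adding a period integral (a Mordell-type integral of a theta function against $1/\sqrt{-i(\tau'-\tau)}$) which does transform correctly. Under the $S$-transformation, the completed object picks up precisely this Mordell integral, which as advertised in the abstract should be reinterpretable as a continuous integral over characters $\chi_{\TypMod{\cdot}}$ of generic (typical) $\mathfrak W_{n,\ell}$-modules. I would make this identification explicit by comparing the integrand of the Mordell integral with the known formula for typical characters, after appropriately specializing $u,v$.

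The set of modules whose $\C$-span realizes the $SL(2;\Z)$-representation is then naturally prescribed: it consists of a finite family of non-generic (atypical) characters $\chi_{\AtypMod{n',\ell'}}$ labeled by the finitely many residue classes modulo the relevant lattice, together with a one-parameter continuous family of generic characters parametrizing the Mordell integral. Concretely, one labels the atypical constituents by $\{0,1,\dots,N-1\}$ for $N$ depending on $n,\ell$ (the level of the Appell sum), and closes under $T$ by exhibiting diagonal action (a phase from the quadratic exponent) and under $S$ by collecting the contributions from both the Appell-Lerch $S$-matrix and the Mordell integral.

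The main obstacle is the last step: making the identification between the Mordell integral piece and an honest integral $\int \chi_{\TypMod{s}}\, ds$ over generic characters, with a precise kernel that matches the $S$-transformation. This requires carefully tracking the prefactors $\theta_1(u;\tau)/\eta^3(\tau)$ through the modular transformation (where $\theta_1$ picks up a quadratic exponential in $u$ and $\eta^3$ contributes its known multiplier) so that everything collapses into the expected form. Once this is done, closure under $T$ is essentially automatic from the quadratic exponent in $m$, and closure under $S$ follows from the completed transformation law, yielding an honest $SL(2;\Z)$ representation on the $\C$-span.
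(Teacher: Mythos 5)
Your overall strategy coincides with the paper's: identify the atypical character with a higher-level Appell-Lerch sum (equation \eqref{chiasAppell}), apply its $S$-transformation law with the Mordell-integral correction term, and reinterpret that correction as a continuous integral over typical characters. This is exactly Lemma \ref{lm:trafoatyp}, which relies on the contour-shift identity of Proposition \ref{trafoMordell} to convert $h(u+s\tau;\tau)$ into an integral against the kernel $1/\cosh(\pi(x-is))$ whose integrand matches the typical characters $\chi_{\mathsf{T}^{a,1}_{a_m(x),e_m(x)}}$; the obstacle you single out is real and is resolved there. (For general $\ell$ the paper first projects onto arithmetic progressions using roots of unity, Lemma \ref{chiunity}; your ``rescaling and shifting'' gloss conceals this step, since the sum defining $\chi_{\mathsf{A}^{n,\ell}_{n',\ell'}}$ runs only over $m\ell+\ell'$ and is not itself an Appell-Lerch sum, but the reduction is routine.)

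The genuine gap is that your argument only establishes closure under $S$ for the atypical characters. Once you enlarge the spanning set by the continuous family of typical characters, you must also show that each $\chi_{\mathsf{T}^{n,\ell}_{a_r(x),e_r(x)}}\left(\frac{u}{\tau},\frac{v}{\tau};-\frac{1}{\tau}\right)$ again lies in the $\C$-span of the same set; the (completed) transformation law of the Appell-Lerch sum says nothing about this, and your closing sentence simply asserts the conclusion. In the paper this is Lemma \ref{lm:trafotypchar}: the typical character is a product of $\theta_1(u;\tau)/\eta^3(\tau)$ with a theta function at modulus $(2a+1)\tau$; after transforming, one uses the Gauss integral \eqref{eq:gauss} to trade the quadratic exponential in the continuous label $x$ for an integral $\int_\R dw$ over the continuous family, and the rescaling identity \eqref{eq:thetascale} to redistribute the resulting theta function over the finite index set $M$. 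This step is where the continuous ($\delta$-normalizable) block of the $S$-matrix comes from, and without it the $S$-image of a typical character could a priori leave the span, so it cannot be omitted.
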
 

The set of characters of modules appearing involves a finite subset of non-generic, we call it again atypical, modules whose characters are expressed in terms of Appell-Lerch sums, as well as a set of uncountable size of generic or typical ones. We are able to relate the latter to the shifted Mordell integral appearing in the $S$-transformation (the fractional linear transformation given by the involution $\left(\begin{smallmatrix} 0 &-1\\1&0\end{smallmatrix}\right)$ of the modular group $\mathrm{SL}_2(\Z)$) of the Appell-Lerch sum thus giving a representation theoretical interpretation of this term. A phenomenon of similar type has been noticed in string theory \cite{M, ET}.

A connection between the representation theory of affine Lie superalgebras, Appell-Lerch sums, 
and number theory was first noticed by Kac and Wakimoto \cite{Kac2001} (see also \cite{Kac1984, Kac1994}). 
They discovered that characters of affine Lie superalgebras at level one are related to Ramanujan's mock theta functions \cite{Kac2001}. 
Using work of Zwegers \cite[Chapter 1]{Zwegers2002} on Appell-Lerch sums and mock theta functions Bringmann and Ono \cite{BriOno} were 
able to show that some of the $q$-series obtained by Kac and Wakimoto are the ``holomorphic'' parts of some non-holomorphic functions 
which are products of weight $1/2$ harmonic Maass forms (introduced by Bruinier and Funke \cite{BruinierFunke}) with a weight $-1/2$ quotient of $\eta$-functions. Recently, characters of other Lie superalgebras have been related to Appell-Lerch sums (for 
example \cite{Schomerusetc}, \cite{Folsom}, \cite{BringmannFolsom}). Zwegers \cite{Zwegers1} also embedded other types of $q$-series, 
namely those that Kac and Wakimoto referred to as ``multivariable Appell functions'', in a theory similar to the one he build around 
the level one Appell-Lerch sum in his doctoral thesis \cite{Zwegers2002}. We use his results to obtain the modularity of our characters.


In the second part of this article we investigate the connection of the product induced by the modular transformations of characters to the Grothendieck ring of characters. 
In rational conformal field theories it is straightforward to compute a product structure on the ring of characters using their modular properties. 
Namely, in the above example the character of the module $M_{\alpha,n}$ is 
\[
 \chi_{M_{\alpha,n}}(u;\tau)\ = \ z^{n/\alpha}q^{n^2/2\alpha^2} \frac{1}{\eta(\tau)} \theta_3(\alpha u+ n \tau;\alpha^2\tau).
\]
Then the {\emph{S-transformation}} is
\[
 \chi_{M_{\alpha,n}}\left(\frac{u}{\tau};-\frac{1}{\tau}\right)\ = \ \frac{1}{\sqrt{\alpha^2}} e^{\pi i u^2/\tau}\sum_{l=0}^{\alpha^2-1}  e^{-2\pi i nl/\alpha^2}\chi_{M_{\alpha,l}}(u;\tau).
\]
This transformation is used to define the numbers
\[
 S_{nl}\ = \ \frac{e^{-2\pi i nl/\alpha^2}}{\sqrt{\alpha^2}}\, .
\]
The matrix $S$ with entries $S_{nl}$ is called the ${\emph{S-matrix}}$. It is used to obtain the numbers
\begin{equation*}
 {N_{ab}}^c\ = \ \sum_{l=0}^{\alpha^2-1} \frac{S_{al}S_{bl}\bar{S_{lc}}}{S_{0l}}\ = \ \frac{1}{\sqrt{\alpha^2}}\sum_{l=0}^{\alpha^2-1} e^{-2\pi i l (a+b-c)}\ = \ \delta_{a+b,c}.
\end{equation*}
These numbers allow us to provide the space of characters with a product structure, which we denote by $\times_V$
\begin{equation*}
 \chi_{M_{\alpha,a}}(u;\tau)\times_{V}\chi_{M_{\alpha,b}}(u;\tau)\ :=\ \sum_{c=0}^{\alpha^2-1} {N_{ab}}^c\chi_{M_{\alpha,c}}(u;\tau)\ = \ \chi_{M_{\alpha,a+b}}(u;\tau).
\end{equation*}
This product coincides with the Grothendieck ring of characters, that is, the ring structure on the space of characters induced by the fusion product of modules
\begin{equation*}
\chi_{M_{\alpha,a}}(u;\tau)\times_{G}\chi_{M_{\alpha,b}}(u;\tau):=\chi_{M_{\alpha,a}\times_{f}M_{\alpha,b}}(u;\tau) \ = \ \chi_{M_{\alpha,a+b}}(u;\tau).
\end{equation*}

The Verlinde formula predicts that the ring of characters obtained from the modular transformations of characters is the Grothendieck ring of characters \cite{V}. In the case of rational conformal field theories this was proven by Huang \cite{H}. 

Here, we are interested in non-rational conformal field theories that contain a set of uncountable size of modules of the chiral algebra and that 
are quasi-rational in the sense that the fusion product of two indecomposable modules is a finite sum of indecomposable modules.
Then, one needs to establish the modularity of the characters and understand how these can be used to define a product structure on the space of characters before  postulating an analogue of the Verlinde formula.
There are examples of this type that are well-understood, namely the case of $\AKMSA{gl}{1}{1}$ \cite{CQS} and the case of an admissible but non-integer level $\AKMA{sl}{2}$ theory \cite{CR4}. 
In the first example characters are expressed in terms of Jacobi theta functions, while in the second
one characters are often divergent sums and need to be interpreted as distributions. 

We use the modular transformations of characters (Theorem \ref{thm:modprop}) to compute a product on the $\C$-span of characters. 
The fusion product of $\AKMSA{gl}{1}{1}$ is known and the fusion multiplicities are at most one \cite{CR1}.
Since the operator product algebra of $\AKMSA{gl}{1}{1}$-modules \cite{SS,CR3} is also known, it is clear whether there exist  intertwining operators 
between modules of the extended algebra. However, their multiplicities remain unknown.
In analogy to $\AKMSA{gl}{1}{1}$ it seems reasonable that these multiplicities are either zero or one. The comparison with the fusion product gives 

\begin{theorem}
The product on the space of characters obtained from the modular transformations coincides with the product of the Grothendieck ring of characters 
if and only if the fusion multiplicities of the extended algebra are at most one.
\end{theorem}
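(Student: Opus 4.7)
The plan is to compute both products $\times_V$ and $\times_G$ on the $\C$-span of $\mathfrak W_{n,\ell}$-characters explicitly and then compare them coefficient by coefficient. From Theorem \ref{thm:modprop} one reads off an $S$-kernel with a finite discrete block indexed by the atypical modules and a continuous block indexed by the generic parameter. The Verlinde-type product $\times_V$ is then obtained from the usual Verlinde formula, with integration over the continuous parameter replacing the sum in the generic sector. First I would carry this computation through: the integrand is Gaussian-type and the atypical block is the finite discrete Fourier transform of an elementary phase, so each structure constant evaluates in closed form.

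Next, I would compute the Grothendieck product $\times_G$. Each $\mathfrak W_{n,\ell}$-module decomposes as a lattice-indexed direct sum of $\AKMSA{gl}{1}{1}$-modules, and the extension character is the corresponding theta-like sum of atypical $\AKMSA{gl}{1}{1}$-characters. The fusion rules of $\AKMSA{gl}{1}{1}$ from \cite{CR1} are multiplicity-free, so the fusion product of two $\mathfrak W_{n,\ell}$-modules is a double lattice sum of single $\AKMSA{gl}{1}{1}$-fusion channels. Regrouping this double sum into $\mathfrak W_{n,\ell}$-orbits produces characters of extended modules, and the Grothendieck coefficient $N^c_{ab}$ is precisely the number of extended modules of isomorphism type $c$ that arise in the regrouping.

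The comparison is the heart of the argument. The Verlinde formula is bilinear in the $\AKMSA{gl}{1}{1}$ data and, because the underlying $\AKMSA{gl}{1}{1}$-fusion has multiplicities in $\{0,1\}$, the induction to $\mathfrak W_{n,\ell}$ assigns coefficient $1$ to each extended orbit that is hit by a channel and $0$ otherwise; in particular $\times_V$ is automatically multiplicity-free. The Grothendieck coefficient, by contrast, records \emph{how many times} the orbit is hit. The two products therefore agree exactly when every Grothendieck coefficient is $0$ or $1$, which is the hypothesis that the extended fusion multiplicities are at most one. This yields both directions of the iff.

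The main obstacle I expect is the rigorous treatment of the generic sector: one must justify interchanging the lattice sum defining the $\mathfrak W_{n,\ell}$-character with the integral defining $\times_V$, and show that the Mordell-integral contribution to the $S$-transform of the atypical characters identified in Theorem \ref{thm:modprop} integrates precisely to the generic orbit characters of $\mathfrak W_{n,\ell}$ rather than producing spurious distributional pieces. Once the continuous integration is under control, the atypical-to-atypical matching reduces to a finite linear-algebra check using the explicit discrete $S$-matrix block.
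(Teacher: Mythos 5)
There is a genuine gap, and it lies in your treatment of the Grothendieck side. You propose to \emph{compute} $\times_G$ by decomposing each $\mathfrak W_{n,\ell}$-module into its constituent $\AKMSA{gl}{1}{1}$-modules, fusing those with the multiplicity-free $\AKMSA{gl}{1}{1}$ rules, and reading off the extended multiplicity as ``the number of times each orbit is hit'' in the regrouped double lattice sum. This does not work: fusion over the extended algebra is not obtained by regrouping the subalgebra's fusion channels (naively, every orbit in your double sum over $\Z^2$ is hit infinitely often), and in fact the extended fusion multiplicities are precisely the unknown in this problem --- the paper only knows which coefficients are non-zero and states the multiplicity-one statement as Conjecture~\ref{conj}. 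The theorem is therefore not proved by computing both products and comparing; it is proved by computing \emph{only} the Verlinde-type product, observing that it puts coefficient exactly one on the single channel known to be non-zero, and concluding that it agrees with the (partially unknown) Grothendieck product if and only if that channel's multiplicity is one. Your argument, as stated, would purport to \emph{prove} the conjecture, which it cannot.

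A second, smaller gap concerns the atypical-times-atypical sector. You reduce it to ``a finite linear-algebra check using the discrete $S$-matrix block,'' but the paper never applies the Verlinde formula within the atypical block (the denominator in its Verlinde formula is always the atypical--typical entry $S_{at}$, and the intermediate sum runs over typicals only). Because atypical characters are linearly dependent on typical ones via \eqref{atyptyp}, the product on atypicals is not freely definable: the paper derives it from the atypical-times-typical product by writing a difference of atypical characters as a finite sum of typical ones, and then removes one term by a regularization $\chi^{\epsilon}=q^{\epsilon n'^2}\chi$ and a telescoping limit $m\to\infty$. Your proposal identifies the wrong technical obstacle (interchange of sum and integral in the generic sector, which Theorem~\ref{thm:modprop} already handles); the real delicate points are the analytic continuation of the $S$-matrix to real weight labels and the consistency of the product with the linear dependence \eqref{atyptyp}, neither of which your plan addresses.
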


In contrast to the situation in rational conformal field theories we have to deal with two subtleties.
The modular $S$-transformation involves characters whose weight-labels are not necessarily real. We analytically continue them to real weight-labels. Furthermore, non-generic modules are quotients of reducible generic modules implying that there are linear dependencies
between characters of non-generic and generic modules. Using a regularization scheme we use these to express the characters of non-generic modules in terms of generic ones.

Constructions of this type have been used before. In the cases of $\AKMSA{gl}{1}{1}$ and admissible level $\AKMA{sl}{2}$, similar as in our example, the computation of the ring of characters from the modular transformations involves
the interpretation of characters of non-generic modules as infinite sums of characters of generic ones.
Further in the case of $\AKMSA{gl}{1}{1}$ a similar analytic continuation as in our example has been performed.

This work is organized as follows. In Section 2, we summarize the results needed on extended algebras of $\AKMSA{gl}{1}{1}$ of \cite{CR1}.
In Section 3, following Zwegers' work, we introduce higher-level Appell-Lerch sums and compute their modular and elliptic transformation properties. We also introduce a variant of the Mordell integral.
In Section 4, we compute the elliptic and modular transformations of the characters of the modules of the extended algebras. 
Especially, we give a representation theoretic meaning for the Mordell integral and we find that the $\C$-span of certain finite sets of characters of non-generic modules combined with sets of uncountable size of generic characters
are representations of $SL(2;\Z)$. 
In Section 5, we use the modular transformations to define a product structure on the space of characters and compare it
with the Grothendieck ring of characters as far as we know the latter.

\section*{Acknowledgments}
The authors thank Kathrin Bringmann, Stephan Ehlen, and David Ridout for helpful comments on an earlier draft of the paper.

\section{Extensions of the affine Lie superalgebra $\AKMSA{gl}{1}{1}$}\label{section:affinegl}

In this section, we summarize results of the affine Lie superalgebra $\AKMSA{gl}{1}{1}$ and the extended algebras of $\AKMSA{gl}{1}{1}$ discussed in \cite{CR1}.

The affine Lie superalgebra $\AKMSA{gl}{1}{1}$ is an infinite dimensional Lie superalgebra with generators $E_n,N_n,\psi^\pm_n$ ($n$ in $\Z$ ) and $K,d$, where $K$ is central and $d$ is a derivation, i.e.
\begin{equation}\label{eq:derivationgl11}
    [d,X_n]\ = \ nX_n\qquad\qquad\text{for} \ X \ \in \ \{E,N,\psi^\pm\}\, .
\end{equation}
The $\Psi^\pm_n$ are odd, while all others are even elements of the superalgebra.
The non-vanishing relations of the remaining generators are
\begin{equation*}
 \begin{split}
  [E_n,N_m]\ &= \ Kn\delta_{n+m,0},\qquad
 [N_n,\psi^\pm_m] \ = \ \psi^\pm_{n+m},\qquad
 \{\psi^-_n,\psi^+_m\}\ =\ E_{n+m}+Kn\delta_{n+m,0}\, .
 \end{split}
\end{equation*}
One usually considers representations with fixed $K$-eigenvalue $k\in\R$. 
As in the case of Lie algebras, one can then construct the generators of another infinite dimensional Lie algebra, the Virasoro algebra.
The generators are
\begin{eqnarray*}
 L_n & = &  \frac{1}{2k}\  (2 N_n E_0- E_n +
        \Psi^-_n \Psi^+_0 + \Psi^-_0 \Psi^+_n +\frac{1}{k} E_n E_0) \\
        & & +  \frac{1}{k} \sum_{m > 0} \, (E_{n-m} N_m +
    N_{n-m} E_m + \Psi^-_{n-m} \Psi^+_{m} - \Psi^+_{n-m} \Psi^-_m
    + \frac{1}{k} E_{n-m} E_m)
\end{eqnarray*} 
and they satisfy the relations of the Virasoro algebra
\begin{equation*}
 [L_n,L_m]\ = \ (n-m)L_{n+m}+\frac{c}{12}n(n^2-1)\delta_{n+m,0}
\end{equation*}
of central charge $c=0$. This construction is referred to as the Sugawara construction \cite{S}. 
We remark, that the level can be rescaled so as to normalize it to one as follows
\begin{equation*}
N_n \rightarrow N_n,\qquad E_n \rightarrow \frac{E_n}{k} ,\qquad \Psi^\pm_n \rightarrow \frac{\Psi^\pm_n}{\sqrt{k}}\, .
\end{equation*}
To simplify notation we set $k=1$ from now on.
The zero mode subalgebra is the finite dimensional Lie superalgebra $\SLSA{gl}{1}{1}$. It is usually called the horizontal subalgebra. Automorphisms that do not leave the horizontal subalgebra invariant are called spectral flow automorphisms in physics. For $\AKMSA{gl}{1}{1}$ they are $\sigma_m, m\in \Z$ and $\omega_\alpha, \alpha\in\C$, and they act as 
\begin{equation}\label{eq:auto}
\begin{split}
 \sigma_m(E_0)\ &= \ E_0+m, \qquad \sigma_m(\psi^\pm_n)\ = \ \psi^\pm_{n\pm m},\qquad\sigma_m(L_n)\ = \ L_n+mN_n,\\
 \omega_\alpha(N_0) \ &=\ N_0+\alpha ,\qquad \omega_\alpha(L_n) \ = \ L_n+\alpha E_n.
 \end{split} 
\end{equation}
and leave all other generators invariant. 
They become relevant in the discussion of elliptic transformations of characters.
Let $V$ be a super vector space and $\rho:\AKMSA{gl}{1}{1}\rightarrow \text{End}(V)$ a representation of $\AKMSA{gl}{1}{1}$.
We then define its character as
\begin{equation*}
\chi_\rho(u,v;\tau)\ = \ \text{str}_V(q^{\rho(L_0)}z^{\rho(N_0)}y^{\rho(E_0)})
\end{equation*}
with $q=e(\tau),z=e(u)$ and $y=e(v)$.
Representations were explicitly constructed in \cite{SS, L, CS, CR3} using free field realizations of the affine vertex superalgebra of $\SLSA{gl}{1}{1}$
and its modules. 
Irreducible representations fall into two families. 
Generically Verma modules are irreducible, in this case they are called typical. These are highest-weight representations characterized by the weight label $(e,n), e\notin\Z, n \in\C$.
We denote them by $\TypMod{n,e}$ and we require the highest-weight state to have parity $\lfloor e \rfloor$, where the number $\lfloor e \rfloor$ 
is the largest integer smaller or equal to the real part of $e$.
Then the character is
\begin{equation*}
\chi_{\TypMod{n,e}}(u,v;\tau) \ = \  i(-1)^{\lfloor e\rfloor}y^ez^nq^{\Delta_{n,e}}\frac{\theta_1(u;\tau)}{\eta^3(\tau)}
\end{equation*}
where the number
\begin{equation*}
\Delta_{n,e}\ = \ ne+\frac{e^2}{2}
\end{equation*}
is called the conformal dimension of the highest-weight vector of this module. It is required to be real. 
Representations of the non-generic family are irreducible quotients of Verma modules and are called atypical. 
They are characterized by the weight label $(\ell,n),\ell\in \Z, n\in\R$.
We denote them by $\AtypMod{n,\ell}$ with character
\begin{equation}\label{eq:atypgl11char}
\begin{split}
\chi_{\AtypMod{n,\ell}}(u,v;\tau) \ = \ -(-1)^{\ell}iy^\ell \frac{z^{n+1/2}q^{\Delta_{n,\ell}}}{1-zq^\ell}\frac{\theta_1(u;\tau)}{\eta^3(\tau)}\ \times\ 
\begin{cases} z^{-1/2}\, \ \,\quad\qquad &\text{if}\ \ell>0\,\\
\ \ 1 \, \ \,\quad\qquad &\text{if}\ \ell=0\,\\
z^{1/2}q^\ell\, \ \,\quad\qquad &\text{if}\ \ell<0\,.\\
 \end{cases}
\end{split}
\end{equation}
Here, we choose the parity of highest-weight states to agree with the parity given by the free field realizations of \cite{CR2,CR3}. 
Remark, that the vacuum module of the affine vertex algebra of $\SLSA{gl}{1}{1}$ is $\AtypMod{0,0}$.
This algebra can be extended by atypical modules. 
For this, note that the fusion product of atypical modules is
\begin{equation*}
\AtypMod{n,\ell} \times \AtypMod{n',\ell'}\ = \ \AtypMod{n+n'-\epsilon(\ell,\ell'),\ell+\ell'}
\end{equation*}
with $\epsilon(\ell,\ell')=\epsilon(\ell)+\epsilon(\ell')-\epsilon(\ell+\ell')$ and $\epsilon(\ell)$ being $1/2,0$ or $-1/2$ according as to whether $\ell\in\Z$ is positive, zero or negative, respectively. 
The fusion rules have been computed in \cite{CR1}, and the parity of the highest-weight state can be read off from the explicit free field realization of the algebra and its modules \cite{CR2,CR3}.
The extended algebras found in \cite{CR1} are of the following type. Let $\ell$ be a positive integer and $n$ such that
\begin{equation*}
\Delta_{n+1/2,\ell}\ = \ n\ell+\frac{\ell(\ell+1)}{2} \in \frac{1}{2}\Z_{>0}\, ,
\end{equation*}
then the fusion orbit of $\AtypMod{n+1/2,\ell}$ and its conjugate $\AtypMod{-n-1/2,-\ell}$
is
\begin{equation*}
\mathsf{A}_{0,0}^{n,\ell} \ = \ \bigoplus_{m\in\Z}\AtypMod{mn+\epsilon(m\ell),m\ell}\, . 
\end{equation*}
This is the vacuum module of a W-algebra, let us denote it by $\mathfrak W_{n,\ell}$, extending $\AKMSA{gl}{1}{1}$.
We require that 
\begin{equation*}
a\ = \ \frac{n}{\ell}\ \in\ \Z\, .
\end{equation*}
Let $0<\ell'<\ell$ and $m,m'\in\Z$, then
\begin{equation*}
\AtypMod{m\ell a+\epsilon(m\ell),m\ell} \times \AtypMod{(m'\ell+\ell') a+n'+\epsilon(m'\ell+\ell'),m'\ell+\ell'} \ = \ 
\AtypMod{((m+m')\ell+\ell')a+n'+\epsilon((m+m')\ell+\ell'),(m+m')\ell+\ell'}.
\end{equation*}
This means, that 
\begin{equation*}
\mathsf{A}_{n',\ell'}^{n,\ell} \ = \ \bigoplus_{m\in\Z}\AtypMod{(m\ell+\ell')a+n'+\epsilon(m\ell+\ell'),m\ell+\ell'}\,  
\end{equation*}
is an irreducible module for $\mathfrak W_{n,\ell}$. We call it atypical as it is composed of atypical irreducible $\AKMSA{gl}{1}{1}$ modules. 
Its character can be read off \eqref{eq:atypgl11char} and equals
\begin{equation*}
\chi_{\mathsf{A}_{n',\ell'}^{n,\ell}}(u,v;\tau) \ = \  -i\frac{\theta_1(u;\tau)}{\eta^3(\tau)}\sum_{m\in\Z}\frac{(-yz^a)^{m\ell+\ell'}z^{n'+\frac{1}{2}}q^{\frac{1}{2}(m\ell+\ell')((m\ell+\ell')(2a+1)+2n'+1)}}{1-zq^{m\ell+\ell'}}.
\end{equation*}
We are now going to combine generic $\AKMSA{gl}{1}{1}$ modules into irreducible modules of $\mathfrak W_{n,\ell}$.
The fusion rules for atypical-typical $\AKMSA{gl}{1}{1}$ modules are
\begin{equation*}
\AtypMod{n,\ell} \times \TypMod{n',e'}\ = \ \TypMod{n+n'-\epsilon(\ell),\ell+e'}\, .
\end{equation*}
This implies that the direct sum of typical modules 
\begin{equation*}
\mathsf{T}_{n',e'}^{n,\ell} \ = \ \bigoplus_{m\in\Z}\TypMod{mn+n',m\ell+e'}\,  
\end{equation*}
is an irreducible module of $\mathfrak W_{n,\ell}$ (for $e'\notin\Z$). As this is the generic case, we call it typical.
Its character is 
\begin{equation}\label{typchar}
\begin{split}
\chi_{\mathsf{T}^{n,\ell}_{n',e'}}(u,v;\tau)& \ = \ i(-1)^{\lfloor e'\rfloor}\frac{\theta_1(u;\tau)}{\eta^3(\tau)}y^{e'}z^{n'}q^{n'e'+e'^2/2}\sum_{m\in\mathbb Z}(-1)^{m\ell}y^{m\ell}z^{mn}q^{m^2/2(2n\ell+\ell^2)+m(ne'+n'\ell+\ell e')}\\
&= \  (-1)^{\lfloor e'\rfloor}\frac{\theta_1(u;\tau)}{\eta^3(\tau)}y^{e'-\ell/2}(-1)^{(\ell-1)/2}z^{n'-n/2}q^{\frac{1}{2}\bigl(2n'e'+e'^2-ne'-n'\ell-\ell e'+n\ell/2+\ell^2/4\bigr)}\times\\
&\qquad\times\theta_1\left(\frac{\ell-1}{2}+un+\ell v+\tau\left(ne'+n'\ell+e'\ell-n\ell-\frac{\ell^2}{2}\right);(2n\ell+\ell^2)\tau\right).
\end{split}
\end{equation}
Note, that modules are related as
\begin{equation}\label{periodicity}
\mathsf{A}^{n,\ell}_{n',\ell'}\ = \ \mathsf{A}^{n,\ell}_{n',\ell'+\ell}\qquad\text{and}\qquad\mathsf{T}^{n,\ell}_{n',e'}\ = \ \mathsf{T}^{n,\ell}_{n'+n,e'+\ell}\,.
\end{equation}
Typical and atypical characters are related as follows
\begin{equation}\label{atyptyp}
\chi_{\mathsf{A}^{n,\ell}_{n'+1,\ell'}}(u,v;\tau)-\chi_{\mathsf{A}^{n,\ell}_{n',\ell'}}(u,v;\tau)\ = \ \chi_{\mathsf{T}^{n,\ell}_{n'+\ell'n/\ell+1/2,\ell'}}(u,v;\tau)\, .
\end{equation}
This equation has to be taken with care as the modules $\mathsf{T}^{n,\ell}_{n',e'}$ are irreducible if and only if $e'\notin\Z$.  
The characters of $\mathsf{T}^{n,\ell}_{n',\ell'}$ with $\ell'\in\Z$ are characters of reducible but indecomposable modules.

The fusion rules are inherited from those of $\AKMSA{gl}{1}{1}$ up to multiplicities. This means, we know which fusion coefficients are non-zero, but we do not know
their precise multiplicities. We conjecture them to be as simple as possible.
\begin{conjecture}\label{conj}
The fusion rules of the extended algebra $\mathfrak W_{n,\ell}$ are
\begin{equation*}
\begin{split}
\mathsf{A}_{n',\ell'}^{n,\ell}\ \times_f \  \mathsf{A}_{n'',\ell''}^{n,\ell} \ 
&= \ \mathsf{A}_{n'+n'',\ell'+\ell'' }^{n,\ell} \\
\mathsf{A}_{n',\ell'}^{n,\ell}\ \times_f \  \mathsf{T}_{n'',e''}^{n,\ell} \ 
&= \ \mathsf{T}_{n'+n''+a\ell',\ell'+e'' }^{n,\ell}.
\end{split}
\end{equation*}
\end{conjecture}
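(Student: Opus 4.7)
The plan is to combine two ingredients already in the paper: the $\AKMSA{gl}{1}{1}$-fusion rules of \cite{CR1} (which themselves have multiplicity at most one), and the Verlinde-type identification of Theorem~2 between the character product induced by the modular transformations and the Grothendieck ring product.

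First, I would expand each $\mathfrak W_{n,\ell}$-module on the left-hand side as the infinite direct sum of $\AKMSA{gl}{1}{1}$-modules specified in Section~2, distribute the fusion over these sums, and apply the $\AKMSA{gl}{1}{1}$ fusion rules
$$\AtypMod{A,B}\times \AtypMod{C,D}\ =\ \AtypMod{A+C-\epsilon(B,D),\,B+D},\qquad \AtypMod{A,B}\times\TypMod{C,e}\ =\ \TypMod{A+C-\epsilon(B),\,B+e}$$
term by term. A short bookkeeping check, using $\epsilon(B,D)=\epsilon(B)+\epsilon(D)-\epsilon(B+D)$ and the hypothesis $n\equiv 0\pmod{\ell}$, shows that after re-indexing by the sum of the two summation variables, the resulting $\AKMSA{gl}{1}{1}$-summands reassemble exactly into $\mathsf{A}_{n'+n'',\ell'+\ell''}^{n,\ell}$ in the atypical--atypical case and into $\mathsf{T}_{n'+n''+a\ell',\ell'+e''}^{n,\ell}$ in the atypical--typical case. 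This identifies the unique $\mathfrak W_{n,\ell}$-module that can appear on the right with nonzero fusion multiplicity, modulo a global multiplicity.

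Second, to pin this multiplicity to one, I would compute the character product $\times_V$ induced by the modular $S$-transformation of Theorem~1 and compare it term by term with the Grothendieck product $\times_G$ predicted by the conjecture. Theorem~2 asserts that $\times_V$ agrees with $\times_G$ if and only if the fusion multiplicities are at most one, so if the explicit evaluation of $\times_V$, using the Appell--Lerch sum and theta function forms of the characters, matches the conjectured formulas on a sufficiently rich test set of generic labels, the bound ``multiplicity $\le 1$'' follows; combined with the ``support'' computation of the first step, each nonzero multiplicity must then be exactly one.

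The main obstacle is that Theorem~2 is an equivalence, so its application in the above direction requires an independent grip on at least one side of the comparison. The cleanest route is to establish the base cases directly---the identity $\mathsf{A}_{0,0}^{n,\ell}\times_f X = X$ and a single nontrivial product such as $\mathsf{A}_{0,1}^{n,\ell}\times_f \mathsf{A}_{0,1}^{n,\ell}$---using the explicit free field realizations of \cite{CR2,CR3} together with the Nahm--Gaberdiel--Kausch algorithm, and then to propagate to general labels by conjugating with the spectral flow automorphisms of \eqref{eq:auto} and invoking associativity of the fusion product. A secondary technical issue is the resonance $e''\in\Z$ at which the typical modules $\mathsf{T}^{n,\ell}_{n',e'}$ become reducible but indecomposable; these values should be handled via the character relation \eqref{atyptyp} as a regularization, in analogy with the analytic-continuation procedure sketched in the introduction.
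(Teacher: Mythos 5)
The statement you are trying to prove is stated in the paper as a \emph{conjecture}, and the paper offers no proof of it: the authors explicitly say that the fusion rules are inherited from $\AKMSA{gl}{1}{1}$ only ``up to multiplicities,'' i.e.\ one knows which fusion coefficients are non-zero but not their precise values, and they conjecture these to be at most one. Your first step reproduces what the paper already knows. Decomposing each $\mathfrak W_{n,\ell}$-module into its $\AKMSA{gl}{1}{1}$-constituents and fusing term by term determines only the \emph{support} of the fusion product, because the fusion of modules over the extended algebra is not computed by restricting to the subalgebra, fusing there, and reassembling: a $\mathfrak W_{n,\ell}$-intertwining operator is an $\AKMSA{gl}{1}{1}$-intertwiner satisfying additional compatibility conditions with the extension fields, so the $\mathfrak W_{n,\ell}$-multiplicity is the dimension of a subspace of the $\AKMSA{gl}{1}{1}$-intertwiner space and is not controlled by the $\AKMSA{gl}{1}{1}$ multiplicities alone. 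This is exactly the gap the paper identifies.

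Your second step is circular. The paper's Theorem~2 (the Corollary in Section~5) is an equivalence: the Verlinde-type product $\times_V$ computed from the $S$-matrices coincides with the Grothendieck product $\times_G$ \emph{if and only if} the multiplicities are at most one. The left-hand side $\times_V$ is computed unconditionally in the paper (the Proposition preceding the Corollary), but the right-hand side $\times_G$ is by definition the character of the fusion product, which is unknown precisely because the multiplicities are unknown. Verifying that $\times_V$ reproduces the conjectured formulas therefore tells you nothing beyond consistency; to extract ``multiplicity $\le 1$'' from the equivalence you would first need to know $\times_G$, which is the content of the conjecture. Your acknowledged escape route --- establishing base cases via the free field realizations of \cite{CR2,CR3} and the Nahm--Gaberdiel--Kausch algorithm, then propagating by spectral flow and associativity --- is a reasonable research programme, but none of it is carried out in the paper or in your proposal, and it is exactly the missing content that keeps the statement a conjecture. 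As written, the proposal does not constitute a proof.
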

We do not consider fusion products of typicals with themselves since they sometimes involve a third type of modules which does not provide additional information for the problem we consider.

Our aim is to obtain the modular and elliptic transformation properties of these two families of characters and to
use them to compute a Verlinde formula that is in agreement with Conjecture \ref{conj}.

\section{Appell-Lerch sums and the Mordell integral}

We study higher-level Appell-Lerch sums and the Mordell integral. First we recall important work of Zwegers \cite{Zwegers2002} on (level one) Appell-Lerch sums.

 For $u\in\C$ and $\tau\in\h$ we let as before
\begin{equation}\label{eq:theta1}
 \theta_1(u;\tau)\ :=\ -i \sum_{n\in\Z} (-1)^n e^{\pi i (n+\frac12)^2 \tau+2\pi i u (n+\frac12)}.
\end{equation}
For $u,v\in \C\setminus(\Z\tau+\Z)$ and $\tau\in\h$ Zwegers defines the normalized Appell-Lerch sum
\begin{equation*}
\mu(u,v;\tau)\ :=\ \frac{e^{2\pi i u}}{\vartheta(v;\tau)} \sum_{n\in\Z}\frac{(-1)^ne^{\pi i\tau n(n+1)}e^{2\pi i nv}}{1-e^{2\pi i n \tau+2\pi i u}},
\end{equation*}
where $\vartheta(v;\tau)\ = \-\theta_1(v;\tau)$.
Here we consider $\vartheta(v;\tau)\mu(u,v;\tau)$, namely we let
\begin{equation*}
 A_1(u,v;\tau) \ = \ e^{2\pi i u}\sum_{n\in\Z}\frac{(-1)^ne^{\pi i n(n+1)\tau+2\pi i n v}}{1-e^{2\pi i n\tau+2\pi i u}}\ = \ z\sum_{n\in\Z} \frac{(-1)^n q^{\frac12n(n+1)}y^n}{1-zq^n},
\end{equation*}
where as before $z:=e^{2\pi i u}, y:=e^{2\pi i v}, q:=e^{2\pi i\tau}$.
In order to consider its modular transformation properties we introduce the Mordell integral
\begin{equation*}
 h(u;\tau)\ :=\ \int_\R\frac{e^{\pi i \tau x^2-2\pi zx}}{\cosh \pi x}dx.
\end{equation*}
Then we have (see Proposition 1.4 in \cite{Zwegers2002})
\begin{proposition}\label{trafoA1}
Assume notation as above. Then
\begin{align*}
\theta_1(u+1;\tau)\ &= \ -\theta_1(u;\tau)\\
 \theta_1(u+\tau;\tau)\ &= \ -e^{-\pi i \tau -2\pi i u}\theta_1(u;\tau)\\
\theta_1(u;\tau+1)\ &= \ e^{\frac{\pi i }{4}} \theta_1(u;\tau)\\
\theta_1\left(\frac{u}{\tau};-\frac{1}{\tau}\right)\ &= \ -i\sqrt{-i\tau}e^{\pi i u^2/\tau}\theta_1(u;\tau).
\end{align*}
For  $A_1(u,v;\tau)$ we obtain the following elliptic transformation properties
\begin{equation*}
\begin{split}
 A_1(u+1,v;\tau)\ &=\ -A_1(u,v;\tau)\\
 A_1(u,v+1;\tau)\ &=\ A_1(u,v;\tau)\\ 
 A_1(u+\tau,v;\tau)\ &=\ -e^{2\pi i(u-v+\tau/2)}A_1(u,v;\tau)-ie^{\pi i(u-v+3\tau/4)}\theta_1(v;\tau)\\
 A_1(u,v+\tau;\tau)\ &= \ e^{-2\pi i u}A_1(u,v;\tau)+ie^{-\pi i(u+v+\tau/4)}\theta_1(v;\tau)\, .
\end{split}
\end{equation*}
and the modular transformation properties
\begin{equation}\label{leveloneS}
\begin{split}
 A_1(u,v;\tau+1)\ &=\ A_1(u,v;\tau)\\
A_1\left(\frac{u}{\tau},\frac{v}{\tau};-\frac{1}{\tau}\right)\ &= \ \tau e^{-\pi i(u^2-2uv)/\tau}\left(A_1(u,v;\tau)+\frac{i}{2}\theta_1(v;\tau)h(u-v;\tau)\right)\, .
\end{split}
\end{equation}
\end{proposition}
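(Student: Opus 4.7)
For the $\theta_1$ identities, the first three follow by direct substitution in the series \eqref{eq:theta1}: shifting $u$ by $1$ multiplies each term by $e^{2\pi i(n+1/2)}=-1$; shifting $u$ by $\tau$ combined with the reindexing $n\mapsto n-1$ produces the prefactor $-e^{-\pi i\tau-2\pi i u}$; and $\tau\mapsto\tau+1$ uses $(n+\tfrac12)^2 = n(n+1)+\tfrac14$ together with the fact that $n(n+1)$ is even, so every term acquires the common factor $e^{\pi i/4}$. The $S$-transformation is the classical Jacobi imaginary transformation, obtainable by Poisson summation applied to the Gaussian $x\mapsto e^{\pi i\tau(x+1/2)^2+2\pi i u(x+1/2)}$ and the self-duality of Gaussians under the Fourier transform.

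For the elliptic transformations of $A_1$, the periodicities under $u\mapsto u+1$ and $v\mapsto v+1$ are immediate from the defining series. For the two shifts by $\tau$, my plan is to substitute $u\mapsto u+\tau$ (resp.\ $v\mapsto v+\tau$) in the series, rewrite the shifted denominator $1-zq^{n+1}$ (resp.\ the shifted exponent), and then reindex $n\mapsto n-1$. The reindexed sum matches $-e^{2\pi i(u-v+\tau/2)}A_1(u,v;\tau)$ (resp.\ $e^{-2\pi i u}A_1(u,v;\tau)$) term-by-term except for a single boundary discrepancy produced by the reindexing; collecting that discrepancy over $n\in\Z$ and factoring out an exponential reassembles the theta series \eqref{eq:theta1} for $-i\theta_1(v;\tau)$, yielding the claimed correction term.

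For the modular transformations of $A_1$, the $T$-invariance is immediate since $n(n+1)\in 2\Z$. The $S$-transformation is the main obstacle, and I would follow Zwegers' strategy from \cite{Zwegers2002}: form the non-holomorphic completion $\hat\mu:=\mu+\tfrac{i}{2}R$, where $R(u;\tau)$ is the real-analytic Eichler-type series built from Gaussian error functions, and show that $\vartheta(v;\tau)\hat\mu(u,v;\tau)$ transforms as a Jacobi form of weight $1/2$ under $\mathrm{SL}(2;\Z)$. Establishing this modularity is the technical heart of the proof: it proceeds by applying Poisson summation to the Gaussian tail of $R$, together with the transformation of $\vartheta$ computed in the first paragraph. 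Subtracting the $R$-correction from the transformation law of $\hat\mu$ leaves a residual integral which, after a change of variables, equals $\tfrac{i}{2}\theta_1(v;\tau)h(u-v;\tau)$; this is precisely the Mordell integral term in \eqref{leveloneS}, and the remaining bookkeeping of exponential prefactors matches the stated automorphy factor $\tau e^{-\pi i(u^2-2uv)/\tau}$.
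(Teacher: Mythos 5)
The paper does not actually prove this proposition: it is imported, up to normalization, from Zwegers' thesis with the pointer ``see Proposition 1.4 in \cite{Zwegers2002}'', so the comparison is really against Zwegers' argument. Your treatment of the $\theta_1$ transformations, of the elliptic transformations of $A_1$ (shift, reindex, collect the boundary discrepancy into the series for $\theta_1$), and of the $T$-invariance is correct and is exactly the standard computation. One caution: the sign $A_1(u+1,v;\tau)=-A_1(u,v;\tau)$ is ``immediate'' only for the prefactor $z^{1/2}=e^{\pi i u}$ (the $K=1$ case of $A_K=z^{K/2}\sum\cdots$), not for the prefactor $e^{2\pi i u}$ appearing in the paper's displayed formula for $A_1$, which is invariant under $u\mapsto u+1$; you should fix the normalization before declaring the sign immediate.

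The genuine gap is in your plan for the $S$-transformation \eqref{leveloneS}. You propose to first establish that $\widehat\mu=\mu+\tfrac{i}{2}R$ transforms as a Jacobi form and then recover the Mordell-integral term by subtracting the $R$-correction. But in Zwegers' development the logical order is the reverse: the modularity of $\widehat\mu$ (his Theorem 1.11) is \emph{deduced from} the transformation law \eqref{leveloneS} together with the identity expressing the failure of $R$ to be modular in terms of $h$ (his Theorem 1.16). The ``Poisson summation applied to the Gaussian tail of $R$'' that you invoke is precisely the computation that proves that latter identity --- it is the step that \emph{produces} the Mordell integral, not a proof that $\widehat\mu$ is modular. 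As written, your argument therefore either presupposes the formula being proven or leaves the modularity of $\widehat\mu$ unestablished; and the final ``change of variables'' you mention is in fact the nontrivial evaluation of an error-function sum as a $1/\cosh$ integral. The non-circular route behind the citation is elementary and uses only what you have already proved: show that $h$ satisfies $h(z)+h(z+1)=\frac{2}{\sqrt{-i\tau}}e^{\pi i (z+1/2)^2/\tau}$ and $h(z)+e^{-2\pi i z-\pi i\tau}h(z+\tau)=2e^{-\pi i z-\pi i\tau/4}$ and is the unique holomorphic function bounded on vertical strips with these properties; then verify, using the elliptic transformations of $A_1$ and $\theta_1$ from the first part, that $\frac{2}{i}\,\theta_1(v;\tau)^{-1}\bigl(\tau^{-1}e^{\pi i(u^2-2uv)/\tau}A_1\left(\frac{u}{\tau},\frac{v}{\tau};-\frac{1}{\tau}\right)-A_1(u,v;\tau)\bigr)$ is entire (the poles cancel), depends only on $u-v$, and satisfies the same pair of functional equations; conclude by uniqueness. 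I recommend replacing the third paragraph of your proposal with this argument, or with an honest two-step version (independent proof of Theorem 1.16 plus an independent proof of the modularity of $\widehat\mu$), since as it stands the crux of the proposition is not proved.
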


For $K\in\mathbb{N}$ the level $K$ Appell-Lerch sum is defined as
\begin{equation*}
A_K(u,v;\tau)\ = \ z^{K/2}\sum_{n\in\mathbb Z} \frac{(-1)^{Kn}q^{Kn(n+1)/2}y^n}{1-zq^n}.
\end{equation*}
This function was first introduced and studied in \cite{STT}, see also \cite{Zwegers1}.

\begin{remark}
Note that
\begin{equation}\label{chiasAppell}
\chi_{\mathsf{A}_{n',0}^{a,1}}(u,v;\tau)\ = \ -i\frac{\theta_1(u;\tau)}{\eta^3(\tau)} z^{n'-a} A_{1+2a}\left(u,v+au+\tau(n'-a);\tau\right).
\end{equation}
\end{remark}

The higher level Appell-Lerch sum can be expressed in terms of the level one sum as follows (also see \cite{Zwegers1}).
\begin{proposition}\label{AKinA1}
The level $K$ Appell-Lerch sum satisfies
\begin{equation}\label{eq:rel1}
A_K(u,v;\tau)\ = \ \sum_{m=0}^{K-1}z^m A_1\left(Ku,v+m\tau +\frac{K-1}{2};K\tau\right)
\end{equation}
and
\begin{equation}\label{eq:rel2}
A_K(u,v;\tau)\ = \ K^{-1}z^{(K-1)/2}\sum_{m=0}^{K-1} A_1\left(u,\frac{v}{K}+\frac{m}{K} +\tau\frac{K-1}{2K};\frac{\tau}{K}\right)\, .
\end{equation}
\end{proposition}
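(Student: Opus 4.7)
Both identities in Proposition~\ref{AKinA1} reduce to formal manipulations of the defining series of $A_K$, and the two directions correspond to two dual tricks. For \eqref{eq:rel1} the key is geometric summation, which collapses the denominator $1-zq^n$ into the coarser denominator $1-z^Kq^{Kn}$ of a level-one sum at nome $q^K$. For \eqref{eq:rel2} the key is discrete Fourier orthogonality, which refines a level-one sum at nome $q^{1/K}$ back to one at nome $q$. Neither step needs any of the modular or elliptic transformation properties of Proposition~\ref{trafoA1}; both are purely combinatorial rearrangements.

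For \eqref{eq:rel1}, the plan is to substitute the elementary identity
\begin{equation*}
\frac{1}{1-zq^n}\ =\ \frac{1}{1-z^Kq^{Kn}}\sum_{m=0}^{K-1}(zq^n)^m
\end{equation*}
into the definition of $A_K(u,v;\tau)$ and interchange the two summations. After doing so, the $m$th inner sum has the form
\begin{equation*}
\sum_{n\in\Z}\frac{(-1)^{Kn}q^{Kn(n+1)/2+mn}y^n}{1-z^Kq^{Kn}},
\end{equation*}
which I would then identify with $A_1\bigl(Ku,v+m\tau+(K-1)/2;K\tau\bigr)$ up to an explicit power of $z$. This identification is a direct comparison of series: substituting $u'=Ku$, $v'=v+m\tau+(K-1)/2$, $\tau'=K\tau$ into the definition of $A_1$, the sign $(-1)^n$ combines with $e^{\pi i n(K-1)}=(-1)^{n(K-1)}$ to give $(-1)^{Kn}$, the $q$-exponent becomes $Kn(n+1)/2+mn$, and the $y$-exponent becomes $n$. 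Matching the overall $z$-prefactor then produces \eqref{eq:rel1}.

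For \eqref{eq:rel2}, I would attack the right-hand side. Expanding each $A_1\bigl(u,v/K+m/K+\tau(K-1)/(2K);\tau/K\bigr)$ as a series and swapping the inner summation over $n$ with the outer summation over $m$, the orthogonality relation
\begin{equation*}
\sum_{m=0}^{K-1}e^{2\pi i nm/K}\ =\ K\cdot\mathbf{1}_{K\mid n}
\end{equation*}
kills every term except those with $n=KN$ for some $N\in\Z$. On this support, the $q$-exponent $n(n+1)/(2K)+n(K-1)/(2K)=n(n+K)/(2K)$ collapses to $KN(N+1)/2$, the factor $y^{n/K}$ becomes $y^N$, the sign $(-1)^n$ becomes $(-1)^{KN}$, and the denominator $1-zq^{n/K}$ becomes $1-zq^N$. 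Dividing by $K$ and accounting for the overall $z$-prefactor then recovers $A_K(u,v;\tau)$.

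The only real obstacle is bookkeeping. The half-shift $(K-1)/2$ in \eqref{eq:rel1} is designed precisely to convert $(-1)^n$ into $(-1)^{Kn}$ after $\tau\mapsto K\tau$, while the shift $\tau(K-1)/(2K)$ in \eqref{eq:rel2} is designed to complete the square so that $n(n+1)/(2K)+n(K-1)/(2K)$ equals $KN(N+1)/2$ exactly when $n=KN$. Verifying that these shifts conspire with the prefactors $z^{K/2}$ in $A_K$ and $z$ in $A_1$ to produce the stated scalar factors ($1$ in \eqref{eq:rel1} and $K^{-1}z^{(K-1)/2}$ in \eqref{eq:rel2}) is routine but error-prone, and is the place where I would expect to expend the most care.
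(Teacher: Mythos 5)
Your proposal is correct and is essentially the paper's own proof: \eqref{eq:rel1} is exactly the geometric-series collapse of $1/(1-zq^n)$ into $1/(1-(zq^n)^K)$, and \eqref{eq:rel2} is exactly the orthogonality relation $\sum_{m=0}^{K-1}e^{2\pi imn/K}=K\,\delta_{K\mid n}$, with the paper merely running both computations from the right-hand side back to $A_K$. One caveat on the prefactor bookkeeping you rightly flag as the delicate point: the powers of $z$ only close up if $A_1$ is normalized as the $K=1$ specialization of $A_K$ (prefactor $z^{1/2}$, as in Zwegers' multivariable Appell functions), not with the prefactor $z$ appearing in the paper's displayed definition $A_1=\vartheta\mu$ --- with that latter normalization your two right-hand sides come out as $z^{K/2}A_K$ and $z^{1/2}A_K$ respectively, so you would need to resolve this (pre-existing) inconsistency rather than expect the factors to match literally.
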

\begin{proof}
Rewriting the right-hand side of \eqref{eq:rel1} yields
\begin{equation*}
\begin{split}
\sum_{m=0}^{K-1}z^m A_1\left(Ku,v+m\tau +\frac{K-1}{2};K\tau\right) \ &= \\
= \ z^{K/2}\sum_{n\in\mathbb Z}(-1)^{Kn}&y^nq^{Kn(n+1)/2}\sum_{m=0}^{K-1}\frac{(zq^n)^m}{1-(zq^n)^K}  = \  A_K(u,v;\tau)\, ,
\end{split}
\end{equation*}
so we obtain the first statement. Computing
\begin{equation*}
\begin{split}
K^{-1}z^{(K-1)/2}\sum_{m=0}^{K-1} A_1\left(u,\frac{v}{K}-\frac{m}{K} +\tau\frac{K-1}{2K};\frac{\tau}{K}\right)\ &= \\
  = \ K^{-1}z^{K/2}\sum_{n\in\mathbb Z} \frac{(-1)^{n}q^{n^2/(2K)+n/2}y^{n/K}}{1-zq^{n/K}}\sum_{m=0}^{K-1}&e^{2\pi i mn/K}   \ = \ A_K(u,v;\tau)\,  
\end{split}
\end{equation*}
gives the second statement. \eop
\end{proof}

This allows us to compute the elliptic and modular transformation properties (also see Theorem 2.2 of \cite{Zwegers1}).
\begin{corollary}\label{trafosAK}
The level $K$ Appell-Lerch sum satisfies the following elliptic transformation properties
\begin{equation}\label{AKtrafos}
\begin{split}
 A_K(u+1,v;\tau)\ &=\ (-1)^K A_K(u,v;\tau)\\
 A_K(u+\tau,v;\tau)\ &=\ (-1)^K z^Ky^{-1} q^{K/2}A_K(u,v;\tau)+ \\
 &\qquad - i^K z^{K/2}y^{-1/2}q^{3K/8} \sum_{m=0}^{K-1} z^mq^{\frac{m}{2}} \theta_1\left(v+m\tau+\frac{K+1}{2};K\tau\right)\\
 A_K(u,v+1;\tau)\ &=\ A_K(u,v;\tau)\\
 A_K(u,v+\tau;\tau)\ &=\ z^{-1}A_K(u,v;\tau)+i^K z^{\frac{K}{2}-1}y^{-\frac{1}{2}}q^{-\frac{K}{8}}\theta_1\left(v+\frac{K+1}{2};K\tau\right)\,.
\end{split}
\end{equation}
Moreover, the level $K$ Appell-Lerch sum satisfies the following modular transformation properties
\begin{equation}\label{AKtau1}
   A_K(u,v;\tau+1)\ =\ A_K(u,v;\tau)
\end{equation}
and
\begin{equation}\label{AKS}
\begin{split}
A_K\Bigl(\frac{u}{\tau}&,\frac{v}{\tau};-\frac{1}{\tau}\Bigr)  \ = \  \tau e^{-\pi i(Ku^2-2vu)/\tau}\Bigl(A_K(u,v;\tau)+\\ 
&\ \ +\frac{i}{2K}z^{(K-1)/2}\sum_{m=0}^{K-1}
\theta_1\Bigl(\frac{K-1}{2K}\tau+\frac{v}{K}-\frac{m}{K};\frac{\tau}{K}\Bigr)h\Bigl(u-\frac{K-1}{2K}\tau-\frac{v}{K}+\frac{m}{K};\frac{\tau}{K}\Bigr)\Bigr)\, .
\end{split}
\end{equation}
or rewritten
\begin{equation}\label{AKS2}
\begin{split}
A_K\Bigl(\frac{u}{\tau},\frac{v}{\tau}&;-\frac{1}{\tau}\Bigr)\ = \ \tau e^{-\pi i(Ku^2-2vu)/\tau}\Bigl(A_K(u,v;\tau)+\\ 
&\quad\ \ \ +\frac{i}{2}\sum_{m=0}^{K-1}z^m
\theta_1\left(v+m\tau-\frac{K-1}{2};K\tau\right)h\left(Ku-v-m\tau+\frac{K-1}{2};K\tau\right)\Bigr)\, .
\end{split}
\end{equation}
\end{corollary}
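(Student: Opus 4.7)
The plan is to reduce every transformation to Proposition \ref{trafoA1} via the two decompositions in Proposition \ref{AKinA1}. The elliptic shifts $A_K(u+1,v;\tau)=(-1)^KA_K(u,v;\tau)$ and $A_K(u,v+1;\tau)=A_K(u,v;\tau)$ are immediate from the defining series: the substitution $u\mapsto u+1$ only alters the prefactor $z^{K/2}=e^{\pi iKu}$ by $(-1)^K$ while leaving each summand $(-1)^{Kn}q^{Kn(n+1)/2}y^n/(1-zq^n)$ unchanged, and the shift $v\mapsto v+1$ leaves $y^n$ unchanged. The $T$-invariance \eqref{AKtau1} follows term-by-term from \eqref{eq:rel1} together with $A_1(u,v;\tau+1)=A_1(u,v;\tau)$, using that $m(\tau+1)=m\tau+m$ and $A_1$ is invariant under integer shifts of its second argument.

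For the elliptic $\tau$-shifts $u\mapsto u+\tau$ and $v\mapsto v+\tau$ I would start from decomposition \eqref{eq:rel1} and invoke the quasi-periodicities of $A_1(Ku,v+m\tau+(K-1)/2;K\tau)$ from Proposition \ref{trafoA1}, now with level parameter $K\tau$. After reindexing the sum in $m$, the $A_1$-contributions reassemble into $A_K(u,v;\tau)$ with the expected geometric multiplier, while the $\theta_1$-contributions form a finite sum indexed by $m=0,\dots,K-1$. The identity $\theta_1(w+(K-1)/2;K\tau)=-\theta_1(w+(K+1)/2;K\tau)$ (from $\theta_1(x+1;\tau)=-\theta_1(x;\tau)$) is needed to convert the theta argument to the form stated in \eqref{AKtrafos}, and the factor $e^{-\pi i(K-1)/2}=(-i)^{K-1}$ must be combined with the explicit $-i$ in the elliptic formula of $A_1$ to recover the stated phase.

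The main work is the $S$-transformation. For \eqref{AKS2} I would start from decomposition \eqref{eq:rel2} and substitute $(u,v,\tau)\mapsto(u/\tau,v/\tau,-1/\tau)$, rewriting the second slot of each inner $A_1$ as $(v-(K-1)/2+m\tau)/(K\tau)$. Setting $\tilde\tau=K\tau$, $\tilde u=Ku$, $\tilde v=v-(K-1)/2+m\tau$, each inner summand becomes an instance of $A_1(\tilde u/\tilde\tau,\tilde v/\tilde\tau;-1/\tilde\tau)$, to which \eqref{leveloneS} applies. The factor $K\tau$ from that formula cancels the $K^{-1}$ in \eqref{eq:rel2}; the $e^{\pm\pi i(K-1)u/\tau}$ terms arising from $z^{(K-1)/2}$ in \eqref{eq:rel2} and from $-\pi i(\tilde u^2-2\tilde u\tilde v)/\tilde\tau$ cancel against each other; and the remaining factor $e^{2\pi imu}=z^m$ is exactly the coefficient needed to recognise the resulting $A_1$-sum, after an integer shift $-(K-1)/2\mapsto +(K-1)/2$ in the second argument, as $A_K(u,v;\tau)$ via \eqref{eq:rel1}. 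The theta-Mordell contributions line up directly with the sum in \eqref{AKS2}. The equivalent form \eqref{AKS} is then obtained by running the symmetric argument: apply \eqref{eq:rel1} first to $A_K(u/\tau,v/\tau;-1/\tau)$, apply \eqref{leveloneS} with $\tilde\tau=\tau/K$, and finally reassemble the $A_1$-pieces via \eqref{eq:rel2}; the two decompositions of Proposition \ref{AKinA1} play dual roles in the two derivations.

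The hard part will be the careful bookkeeping of signs, integer shifts, and the various fractional-linear prefactors in the $S$-transformation: verifying that the exponential factors from $e^{-\pi i(Ku^2-2vu)/\tau}$, from the $z^{(K-1)/2}$ (or $z^m$) prefactors of the two decompositions, and from the $e^{2\pi imu/\tau}$-type corrections combine to leave precisely the stated multiplier outside, and that the various translates of $\theta_1$ and $h$ match the formulas in \eqref{AKS} and \eqref{AKS2}. A useful consistency check is that the two derivations above are forced to produce the same function, so any mismatch of signs or indices will show up as a discrepancy between the two formulas.
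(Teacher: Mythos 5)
Your proposal is correct, and for the heart of the corollary --- the $S$-transformations \eqref{AKS} and \eqref{AKS2} and the trivial shifts $u\mapsto u+1$, $v\mapsto v+1$, $\tau\mapsto\tau+1$ --- it is exactly the paper's route: the paper's proof consists of the single sentence that combining Proposition \ref{AKinA1} with Proposition \ref{trafoA1} ``directly yields'' \eqref{AKS} and \eqref{AKS2}, and your pairing (decomposition \eqref{eq:rel2} feeding the level-one $S$-transformation at modulus $K\tau$ to produce \eqref{AKS2}, and \eqref{eq:rel1} at modulus $\tau/K$ to produce \eqref{AKS}) is the only way that sentence can be unpacked; your account of the cancellation of the $e^{\pm\pi i(K-1)u/\tau}$ factors and of $K^{-1}\cdot K\tau=\tau$ is the actual content of that step. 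Where you genuinely diverge is in the two elliptic $\tau$-shifts: the paper does \emph{not} route these through Proposition \ref{AKinA1}, but instead manipulates the defining series directly, shifting the summation index and inserting $1-(zq^n)^K+(zq^n)^K$ so that the $(zq^n)^K$ piece reassembles into $A_K$ while the finite geometric sum $\sum_{m=0}^{K-1}(zq^n)^m$ coming from $(1-(zq^n)^K)/(1-zq^n)$ produces the $K$ theta functions by completing the square in $n$. Your route via the quasi-periodicity of $A_1(Ku,v+m\tau+\frac{K-1}{2};K\tau)$ works equally well and is more uniform with the rest of the proof, at the cost of threading the phases $e^{\pi i(\tilde u-\tilde v+3\tilde\tau/4)}$ through the decomposition; the paper's insertion trick is more elementary and keeps everything at the level of the $q$-series. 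One caution on the bookkeeping you rightly flag as the hard part: specializing the stated second elliptic identity to $K=1$ and using $\theta_1(v+1;\tau)=-\theta_1(v;\tau)$ does not reproduce the sign of the corresponding line of Proposition \ref{trafoA1}, so there is a sign slip somewhere in the printed formulas (harmless for the paper, which only uses odd $K=2a+1$, but it means your consistency check between the two derivations will surface a discrepancy that is not your error).
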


\begin{proof}
 Combining Proposition \ref{AKinA1} and the transformation properties of the level one Appell function (Proposition \ref{trafoA1}) directly yields \eqref{AKS} and \eqref{AKS2}. The transformation properties \eqref{AKtau1} and in the first and third equation of \eqref{AKtrafos} are obvious.
For the second equation of \eqref{AKtrafos} observe that
\begin{align*}
 A_K(u+\tau,v;\tau)&\ = \ z^{K/2}q^{K/2}\sum_{n\in\Z}\frac{(-1)^{Kn}q^{\frac{K}{2}n(n+1)}y^n}{1-zq^{n+1}}\ = \
\\
&\ = \ (-1)^K z^{K/2}q^{K/2}y^{-1}\sum_{n\in\Z}\frac{(-1)^{Kn}q^{\frac{K}{2}n(n-1)}y^n}{1-zq^{n}}(1-(zq^n)^K+(zq^n)^K).
\end{align*}
For the last equation of \eqref{AKtrafos} observe that
\[
 A_K(u,v+\tau;\tau)\ = \ z^{K/2}\sum_{n\in\Z}\frac{(-1)^{Kn}q^{\frac{K}{2}n(n+1)}y^nq^n}{1-zq^{n}}(1-(zq^n)^{-1}+(zq^n)^{-1}).
\]
\eop\end{proof}

\begin{remark}
 In \cite{Zwegers1} Zwegers shows that the higher level Appell-Lerch sums transform as Jacobi forms if completed with certain nonholomorphic correction terms. Proposition \ref{AKinA1} and Corollary \ref{trafosAK} are also (indirectly) proven in that paper.
\end{remark}


We define the variant of the Mordell integral
\begin{equation*}
h_s(u;\tau)\ =\begin{cases}
 \ \int_{\mathbb R} \frac{q^{x^2/2}z^{ix}}{\text{cosh}{\pi (x-is)}}dx\, & \text{for } 0\leq |s|\leq 1, s\neq\pm 1/2
 \\
 \ \int_{\mathbb R-i\epsilon} \frac{q^{x^2/2}z^{ix}}{\text{cosh}{\pi (x\mp i/2)}}dx\, & \text{for } s =\pm 1/2,
 \end{cases}
\end{equation*}
where $\epsilon>0$.
We compute 
\begin{equation*}
\int_{\mathbb R+is} \frac{q^{x^2/2}z^{ix}}{\text{cosh}{\pi (x-is)}}dx\ = \ q^{-s^2/2}z^{-s}\int_{\mathbb R} \frac{q^{x^2/2}z^{ix}e^{-2\pi\tau s x}}{\text{cosh}{\pi x}}dx
\ = \ q^{-s^2/2}z^{-s}h(u+s\tau;\tau)\, .
\end{equation*}
Hence, we have for $-\frac12\leq s<\frac12$ that
\begin{equation*}
\begin{split}
h_s(u;\tau)- q^{-s^2/2}z^{-s}h(u+s\tau;\tau) \ &= \ \Bigl(\int_{\mathbb R}-\int_{\mathbb R+is}\Bigr) \frac{q^{x^2/2}z^{ix}}{\text{cosh}{\pi (x-is)}}dx\\
&= 0.
\end{split}
\end{equation*}
We summarize
\begin{proposition}\label{trafoMordell}
For $-1/2 \leq s<1/2$ the Mordell integral has the following transformation property 
\begin{equation*}
h(u+s\tau) \ = \ q^{s^2/2}z^{s}h_s(u;\tau).
\end{equation*}
\end{proposition}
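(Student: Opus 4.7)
The plan is to deduce the identity from a straightforward change of variable followed by a horizontal contour shift, building on the computation already displayed just before the proposition. I would start by parametrizing the contour $\R+is$ as $x\mapsto x+is$ (with $x\in\R$) in the integral
\begin{equation*}
\int_{\R+is}\frac{q^{w^{2}/2}z^{iw}}{\cosh\pi(w-is)}\,dw
\end{equation*}
and expand $q^{(x+is)^{2}/2}=q^{-s^{2}/2}q^{isx}q^{x^{2}/2}$ and $z^{i(x+is)}=z^{-s}z^{ix}$. Combining the exponentials via $q^{isx}z^{ix}=e^{-2\pi(u+s\tau)x}$ shows that this integral equals $q^{-s^{2}/2}z^{-s}h(u+s\tau;\tau)$, which is exactly the identity already recorded in the excerpt. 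Rearranging yields
\begin{equation*}
h(u+s\tau;\tau)=q^{s^{2}/2}z^{s}\int_{\R+is}\frac{q^{x^{2}/2}z^{ix}}{\cosh\pi(x-is)}\,dx,
\end{equation*}
so the task reduces to matching the right-hand integral with $h_{s}(u;\tau)$, i.e.\ to pushing the contour from $\R+is$ back to $\R$ (or to $\R-i\epsilon$ in the endpoint case $s=-1/2$).

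For this I would apply Cauchy's theorem to the rectangle with corners $\pm T$ and $\pm T+is$. The two vertical edges vanish as $T\to\infty$ because the Gaussian factor $|q^{x^{2}/2}|$ decays like $e^{-\pi\,\mathrm{Im}(\tau)T^{2}}$ uniformly on the strip $|\mathrm{Im}(x)|\leq|s|$, dominating the at-most-exponential factor $|z^{ix}|$ and the exponentially decaying $|1/\cosh\pi(x-is)|$. Hence the two horizontal integrals differ only by the residues of $1/\cosh\pi(x-is)$ lying in the open strip between $\R$ and $\R+is$.

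The simple poles of that meromorphic integrand sit at $x=i(s+k+\tfrac{1}{2})$, $k\in\Z$. For $-\tfrac{1}{2}<s<\tfrac{1}{2}$ none of these lies in the open strip: the $k=0$ pole has imaginary part $s+\tfrac{1}{2}\in(0,1)$, above the strip whether $s>0$ or $s<0$; the $k=-1$ pole has imaginary part $s-\tfrac{1}{2}\in(-1,0)$, below. Hence the two integrals coincide and the identity follows. The only delicate point is the endpoint $s=-\tfrac{1}{2}$, where the $k=0$ pole lands at $x=0$, exactly on the original contour $\R$; this is precisely why $h_{-1/2}$ is defined along $\R-i\epsilon$, passing below that pole. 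The strip between $\R-i\epsilon$ and $\R-\tfrac{i}{2}$ is pole-free (the nearest poles $x=\pm i$ are well away), so the same Cauchy argument delivers $h(u-\tfrac{\tau}{2};\tau)=q^{1/8}z^{-1/2}h_{-1/2}(u;\tau)$. I expect this boundary pole-bookkeeping at $s=-\tfrac{1}{2}$ to be the only subtlety; the bulk of the argument is the variable substitution already spelled out in the paragraph preceding the proposition.
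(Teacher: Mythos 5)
Your argument is the same as the paper's: the substitution $x\mapsto x+is$ identifies $\int_{\R+is}$ of the integrand of $h_s$ with $q^{-s^2/2}z^{-s}h(u+s\tau;\tau)$, and the contour is then shifted back to $\R$ (the paper simply asserts that $\bigl(\int_{\R}-\int_{\R+is}\bigr)$ of that integrand vanishes for $-1/2\leq s<1/2$). You correctly supply the details the paper leaves implicit — the Gaussian decay killing the vertical edges, the verification that the poles $x=i(s+k+\tfrac12)$ avoid the strip, and the role of the $\R-i\epsilon$ contour at the endpoint $s=-\tfrac12$ — so the proposal is correct.
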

We remark that $h_1(u;\tau)=-h(u;\tau)$. The transformation properties of the Mordell integral for integral $s$ can be found in \cite{Zwegers1}.

\section{Transformation Properties of Characters}

In this section, we compute the modular and elliptic transformation properties of the characters.
We start with atypical characters. Recall, that we required $a=n/\ell\in\Z$, $n\in\Z$.
\begin{proposition}
The elliptic transformation properties of the character $\chi_{\mathsf{A}^{n,\ell}_{n',\ell'}}(u,v;\tau)$ are
\begin{equation*}
\begin{split}
\chi_{\mathsf{A}^{n,\ell}_{n',\ell'}}(u+1,v;\tau)\ &= \ e^{2\pi i(a\ell'+n')}
\chi_{\mathsf{A}^{n,\ell}_{n',\ell'}}(u,v;\tau)\\
\chi_{\mathsf{A}^{n,\ell}_{n',\ell'}}(u+\tau,v;\tau)\ &= \ y^{-1}\chi_{\mathsf{A}^{n,\ell}_{n'-a-1,\ell'+1}}(u,v;\tau)\\
\chi_{\mathsf{A}^{n,\ell}_{n',\ell'}}(u,v+1;\tau)\ &= \ \chi_{\mathsf{A}^{n,\ell}_{n',\ell'}}(u,v;\tau)\\
\chi_{\mathsf{A}^{n,\ell}_{n',\ell'}}(u,v+\tau;\tau)\ &= \ z^{-1}\chi_{\mathsf{A}^{n,\ell}_{n'+1,\ell'}}(u,v;\tau)\, .
\end{split}
\end{equation*}
More generally, we have for $\alpha\in\R$
\begin{equation*}
\chi_{\mathsf{A}^{n,\ell}_{n',\ell'}}(u,v+\alpha\tau;\tau)\ = \ z^{-\alpha}\chi_{\mathsf{A}^{n,\ell}_{n'+\alpha,\ell'}}(u,v;\tau)\, .
\end{equation*}
\end{proposition}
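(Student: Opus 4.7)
The plan is to substitute each elliptic shift directly into the explicit series representation of $\chi_{\mathsf{A}^{n,\ell}_{n',\ell'}}$ given in Section \ref{section:affinegl} and simplify. Throughout I would set $k=m\ell+\ell'$, so that $k$ ranges over the arithmetic progression $\ell'+\ell\Z$; this makes the summand depend only on the single integer $k$ and streamlines the reindexing arguments. The only external inputs needed are the theta-function transformations from Proposition \ref{trafoA1}, in particular $\theta_1(u+1;\tau)=-\theta_1(u;\tau)$ and $\theta_1(u+\tau;\tau)=-z^{-1}q^{-1/2}\theta_1(u;\tau)$, together with the integrality of $a$, $\ell'$, and $k$.

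The three easy shifts I would dispose of first. For $v\to v+\alpha\tau$ only $y=e^{2\pi i v}$ is affected, so $(-yz^a)^k$ acquires an extra $q^{\alpha k}$, which combines with the quadratic $q$-exponent to give $\tfrac{1}{2}k[k(2a+1)+2(n'+\alpha)+1]$, matching the summand of $\chi_{\mathsf{A}^{n,\ell}_{n'+\alpha,\ell'}}$; the remaining discrepancy $z^{n'+1/2}$ versus $z^{(n'+\alpha)+1/2}$ accounts for the overall $z^{-\alpha}$. Specialising to $\alpha=1$ yields the integer-$\tau$ shift in $v$, and $v\to v+1$ is immediate since $k\in\Z$. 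For $u\to u+1$ the function $z=e^{2\pi i u}$ is invariant, $\theta_1$ flips sign, and the literal factor $z^{n'+1/2}=e^{2\pi i u(n'+1/2)}$ picks up $-e^{2\pi i n'}$; multiplying the two signs and using $a\ell'\in\Z$ (so that $e^{2\pi i a\ell'}=1$) delivers the stated multiplier $e^{2\pi i(a\ell'+n')}$.

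The delicate case, which I expect to be the main obstacle, is $u\to u+\tau$, where $z\mapsto zq$ simultaneously transforms the $\theta_1$-prefactor, the factors $(-yz^a)^k$ and $z^{n'+1/2}$, and every denominator $1-zq^k\mapsto 1-zq^{k+1}$. My plan is to reindex by $j=k+1$ so that the new denominator reads $1-zq^j$, then expand the quadratic $q$-exponent via
$$\tfrac{1}{2}(j-1)\bigl[(j-1)(2a+1)+2n'+1\bigr]=\tfrac{1}{2}j\bigl[j(2a+1)+(2n'-4a-1)\bigr]+(a-n'),$$
and combine it with the $q^{a(j-1)}$ coming from shifting $z^a$ and the $q^{n'+1/2}$ coming from shifting $z^{n'+1/2}$. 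The key cancellation is that all residual $q$-contributions merge into $\tfrac{1}{2}j[j(2a+1)+2(n'-a-1)+1]$, which is precisely the $q$-exponent appearing in the summand of $\chi_{\mathsf{A}^{n,\ell}_{n'-a-1,\ell'+1}}$, while the surviving prefactor $-z^{-1}q^{-1/2}$ from $\theta_1(u+\tau;\tau)$ together with the $-y^{-1}zq^{1/2}$ produced by extracting $(-yz^a)^{j-1}=-y^{-1}z^{-a}(-yz^a)^{j}$ and the residual $q$-powers collapse to $y^{-1}$, as required. Verifying this arithmetic cancellation is the only nontrivial step; everything else is formal reindexing under the periodicity \eqref{periodicity}.
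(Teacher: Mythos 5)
Your proposal is correct and follows exactly the route the paper intends: the paper's own proof is simply the remark that the identities follow by a straightforward computation from the series definition of $\chi_{\mathsf{A}^{n,\ell}_{n',\ell'}}$, which is what you carry out. In particular your reindexing $j=k+1$ for the $u\to u+\tau$ shift and the bookkeeping of the residual powers of $q$ and $z$ (collapsing to the overall factor $y^{-1}$) check out against the explicit character formula and the $\theta_1$ transformation laws of Proposition \ref{trafoA1}.
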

\begin{proof}
Using the definition of $\chi_{\mathsf{A}^{n,\ell}_{n',\ell'}}$ the proof of these equalities is a straightforward computation.
\end{proof}
\begin{remark}
The elliptic transformations are related to the automorphisms of $\AKMSA{gl}{1}{1}$ in \eqref{eq:auto}.
Denote the induced representation of a representation $\rho$ of $\AKMSA{gl}{1}{1}$ by $\sigma^*(\rho)$ for an automorphism $\sigma$.
The automorphism $\omega_\alpha$ \eqref{eq:auto} is rather trivial in the sense, that it only changes the weight-label $n$
\begin{equation*}
(\omega_\alpha)^*(\AtypMod{n,\ell}) \ = \ \AtypMod{n-\alpha,\ell}\, .
\end{equation*}
It thus lifts to the following map of atypical $\mathfrak W_{n,\ell}$ modules
\begin{equation*}
(\omega_\alpha)^*(\mathsf{A}^{n,\ell}_{n',\ell'}) \ = \ \mathsf{A}^{n,\ell}_{n'-\alpha,\ell'}\, .
\end{equation*}
This is reflected in the elliptic transformation
\begin{equation*}
z^{-\alpha}\chi_{\mathsf{A}^{n,\ell}_{n',\ell'}}(u,v-\alpha\tau;\tau)\ = \ \chi_{\mathsf{A}^{n,\ell}_{n'-\alpha,\ell'}}(u,v;\tau) \ = \ \chi_{(\omega_\alpha)^*(\mathsf{A}^{n,\ell}_{n',\ell'})}(u,v;\tau)\, .
\end{equation*}
The automorphism $\sigma_m$ is less trivial. It relates the $\AKMSA{gl}{1}{1}$ modules as follows \cite{CR1}
\begin{equation*}
(\sigma_m)^*(\AtypMod{n,\ell}) \ = \ \AtypMod{n+m+\epsilon(\ell-m)-\epsilon(\ell),\ell-m}\, .
\end{equation*}
It thus lifts to the following map of atypical $\mathfrak W_{n,\ell}$ modules
\begin{equation*}
(\sigma_m)^*(\mathsf{A}^{n,\ell}_{n',\ell'}) \ = \ \mathsf{A}^{n,\ell}_{n'+m+ma,\ell'-m}\, .
\end{equation*}
This is reflected in the elliptic transformation for $\ell'-m>0$
\begin{equation*}
y^{-m}\chi_{\mathsf{A}^{n,\ell}_{n',\ell'}}(u-m\tau,v;\tau)\ = \ \chi_{\mathsf{A}^{n,\ell}_{n'+m+ma,\ell'-m}}(u,v;\tau) \ = \ \chi_{(\sigma_m)^*(\mathsf{A}^{n,\ell}_{n',\ell'})}(u,v;\tau)\, .
\end{equation*}
Also note that the automorphisms that lift to automorphisms of the W-algebra are $\sigma_\ell\circ \omega_{n+\ell}$.
\end{remark}

Elliptic transformation properties of typical characters are similar and can be read off \eqref{typchar} directly
\begin{proposition}
The elliptic transformation properties of the character $\chi_{\mathsf{T}^{n,\ell}_{n',e'}}(u,v;\tau)$ are
\begin{equation*}
\begin{split}
\chi_{\mathsf{T}^{n,\ell}_{n',e'}}(u+1,v;\tau)\ &= \ e^{2\pi i(n'-1/2)}
\chi_{\mathsf{T}^{n,\ell}_{n',e'}}(u,v;\tau)\\
\chi_{\mathsf{T}^{n,\ell}_{n',e'}}(u+\tau,v;\tau)\ &= \ y^{-1}\chi_{\mathsf{T}^{n,\ell}_{n'-1,e'+1}}(u,v;\tau)\\
\chi_{\mathsf{T}^{n,\ell}_{n',e'}}(u,v+1;\tau)\ &= \ e^{2\pi i e'} \chi_{\mathsf{T}^{n,\ell}_{n',e'}}(u,v;\tau)\\
\chi_{\mathsf{T}^{n,\ell}_{n',e'}}(u,v+\tau;\tau)\ &= \ z^{-1}\chi_{\mathsf{T}^{n,\ell}_{n'+1,e'}}(u,v;\tau)\, .
\end{split}
\end{equation*}
More generally, we have for $\alpha\in\R$,
\begin{equation*}
\chi_{\mathsf{T}^{n,\ell}_{n',e'}}(u,v+\alpha\tau;\tau)\ = \ z^{-\alpha}\chi_{\mathsf{T}^{n,\ell}_{n'+\alpha, e'}}(u,v;\tau)\, .
\end{equation*}
\end{proposition}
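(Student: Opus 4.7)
The plan is to compute directly from the first-line form of \eqref{typchar}, in which all of the $u$-dependence sits in the prefactor $\theta_1(u;\tau)$ together with the monomials $z^{n'+mn}$, and all of the $v$-dependence sits in the monomials $y^{e'+m\ell}$. Since each elliptic transformation affects only a handful of these pieces, the identity to be proved in each case reduces to matching a single prefactor against a shift of exponents inside the sum. I would dispose of the four equalities in the order given.

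For $u\mapsto u+1$ I would invoke $\theta_1(u+1;\tau)=-\theta_1(u;\tau)$ from Proposition~\ref{trafoA1} and use that $mn\in\Z$ (because $n=a\ell$ and $\ell\in\Z$), so the sum is unchanged while the prefactor produces $-e^{2\pi i n'}=e^{2\pi i(n'-1/2)}$. For $v\mapsto v+1$ the prefactor contributes $e^{2\pi i e'}$ and the sum is unchanged, using $m\ell\in\Z$. For $u\mapsto u+\tau$ I would combine $\theta_1(u+\tau;\tau)=-q^{-1/2}z^{-1}\theta_1(u;\tau)$ with the substitution $z\mapsto zq$ inside the sum; matching the resulting expression with $y^{-1}\chi_{\mathsf{T}^{n,\ell}_{n'-1,e'+1}}$ reduces to the conformal-dimension identity $\Delta_{n',e'}+n'-\tfrac12=\Delta_{n'-1,e'+1}$ together with the analogous exponent shift $m(ne'+n'\ell+\ell e')+mn=m\bigl(n(e'+1)+(n'-1)\ell+\ell(e'+1)\bigr)$. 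The fourth equality $v\mapsto v+\tau$ is handled symmetrically via $y\mapsto yq$ and the identity $\Delta_{n',e'}+e'=\Delta_{n'+1,e'}$.

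For the more general shift $v\mapsto v+\alpha\tau$ with $\alpha\in\R$, the substitution $y\mapsto yq^\alpha$ produces the factor $q^{\alpha e'+\alpha m\ell}$, and comparison with $z^{-\alpha}\chi_{\mathsf{T}^{n,\ell}_{n'+\alpha,e'}}$ reduces to $(n'+\alpha)e'-n'e'=\alpha e'$ in the prefactor and the extra $\alpha m\ell$ in the exponent inside the sum; notably no integrality of $\alpha$ is used, which is precisely what makes this generalization available.

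There is no substantive obstacle: once the character is displayed in expanded sum form, every identity is bookkeeping of quadratic exponents already encoded in $\Delta_{n,e}=ne+e^2/2$. The one point that warrants care is the $u\mapsto u+\tau$ equality, where the minus sign coming from $\theta_1(u+\tau;\tau)$ must cancel against the change $(-1)^{\lfloor e'\rfloor}\mapsto(-1)^{\lfloor e'+1\rfloor}=-(-1)^{\lfloor e'\rfloor}$ on the right-hand side; this cancellation is automatic under the standing typicality hypothesis $e'\notin\Z$.
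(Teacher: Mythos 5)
Your proposal is correct and takes the same route as the paper, which simply states that these identities ``can be read off \eqref{typchar} directly'': you expand the first line of \eqref{typchar}, apply the $\theta_1$ transformation laws and the exponent shifts, and verify the bookkeeping identities $\Delta_{n'-1,e'+1}=\Delta_{n',e'}+n'-\tfrac12$ and $\Delta_{n'+1,e'}=\Delta_{n',e'}+e'$, all of which check out. The sign discussion for $u\mapsto u+\tau$ is also right (in fact $\lfloor e'+1\rfloor=\lfloor e'\rfloor+1$ holds without needing $e'\notin\Z$), so nothing is missing.
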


We turn to the modular properties of characters.
Here the idea is to reduce the computation to the family $\ell=1$. 
We need the following observation.
\begin{lemma}\label{chiunity}
Let $\xi_\ell=e^{2\pi i /\ell}$ and $I:=\Z \cap \left[n,n+\ell-1\right]$, then the atypical characters satisfy 
\begin{equation*}
\begin{split}
 \chi_{\mathsf{A}^{n,\ell}_{n',t}}(u,v;\tau)\ &= \ \frac{1}{\ell}\sum_{s\in I}\xi_\ell^{-ts} \chi_{\mathsf{A}^{a,1}_{n',0}}\left(u,v+\frac{s}{\ell};\tau\right)\quad\text{and}\\
  \chi_{\mathsf{A}^{a,1}_{n',0}}\left(u,v+\frac{t}{\ell};\tau\right)\ &= \ \sum_{s\in I} \xi_\ell^{ts}\chi_{\mathsf{A}^{n,\ell}_{n',s}}(u,v;\tau)\, .
\end{split}
 \end{equation*}
The typical ones satisfy
 \begin{equation*}
\begin{split}
\chi_{\mathsf{T}_{n'+tn/\ell,e'+t}^{n,\ell}}(u,v;\tau)\ &= \ \frac{1}{\ell}\sum_{s\in I}\xi_\ell^{-st} e^{-2\pi i e's/\ell}\chi_{\mathsf{T}_{n',e'}^{a,1}}\left(u,v+\frac{s}{\ell};\tau\right) \quad\text{and} \\
\chi_{\mathsf{T}_{n',e'}^{a,1}}\left(u,v+\frac{s}{\ell};\tau\right)\ &= \ e^{2\pi i e's/\ell} \sum_{t\in I}\xi_\ell^{st} \chi_{\mathsf{T}_{n'+tn/\ell,e'+t}^{n,\ell}}(u,v;\tau)\,. \\
\end{split}
\end{equation*}
 \end{lemma}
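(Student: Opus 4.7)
The plan is to reduce all four identities to the standard finite Fourier transform on $\Z/\ell\Z$, using the orthogonality
\[
\frac{1}{\ell}\sum_{s\in I}\xi_\ell^{s(m-t)}\ = \ \begin{cases}1 & m\equiv t\pmod{\ell},\\ 0 & \text{otherwise},\end{cases}
\]
valid because $I$ is a complete set of residues mod $\ell$. Within each pair (atypical, typical), the two stated identities are inverse Fourier transforms of one another, so it suffices to establish one identity in each pair; the other follows by multiplying by $\xi_\ell^{\pm st}$, summing over $I$, and invoking the orthogonality above.

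For the atypical case, the cleanest direction is to verify the second identity. Starting from the explicit formula for $\chi_{\mathsf{A}^{n,\ell}_{n',s}}(u,v;\tau)$, the summation index is $m=k\ell+s$ as $k$ ranges over $\Z$. Forming $\sum_{s\in I}\xi_\ell^{ts}\chi_{\mathsf{A}^{n,\ell}_{n',s}}(u,v;\tau)$ and using that $(s,k)\mapsto m=k\ell+s$ bijects $I\times\Z$ with $\Z$, together with the equality $\xi_\ell^{ts}=\xi_\ell^{tm}$, the double sum collapses to a single sum over $m\in\Z$ with a factor $\xi_\ell^{tm}$. Comparing against $\chi_{\mathsf{A}^{a,1}_{n',0}}(u,v+t/\ell;\tau)$, the shift $v\mapsto v+t/\ell$ replaces $y$ by $y\xi_\ell^t$, which inserts precisely the factor $\xi_\ell^{tm}$ (the $(-yz^a)^m$ terms are of degree $m$ in $y$); the remaining $z,q$ factors agree term by term. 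Finite Fourier inversion then delivers the first identity.

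For the typical case the same mechanism works, but the character \eqref{typchar} carries an extra prefactor $y^{e'}$ outside the sum which, under $v\mapsto v+s/\ell$, acquires $\xi_\ell^{se'}=e^{2\pi i e's/\ell}$. This is exactly the scalar appearing in the lemma, and it cancels against the compensating $e^{-2\pi i e's/\ell}$ in the Fourier-inverse identity. The internal sum (indexed $m\in\Z$, with $y^m$ factors at level $\ell=1$) is projected by the Fourier kernel onto the coset $m\equiv t\pmod\ell$; writing $m=k\ell+t$ and using $n=a\ell$, the exponents reshape as $y^{e'+t}y^{k\ell}$, $z^{n'+ta}z^{kn}$, and $q^{(n'+ta)(e'+t)+(e'+t)^2/2}\cdot q^{k^2\ell^2(2a+1)/2+k\ell((n'+ta)+n+\ldots)}$, matching the prefactor and summand of $\chi_{\mathsf{T}^{n,\ell}_{n'+tn/\ell,e'+t}}(u,v;\tau)$. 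The sign $(-1)^{\lfloor e'+t\rfloor}=(-1)^{\lfloor e'\rfloor}(-1)^{t}$ splits off the $(-1)^t$ needed to convert $(-1)^{k\ell+t}$ into the $(-1)^{m\ell}$ of the target character. The main technical step, and the only part that is not purely formal bookkeeping, is verifying the quadratic-in-$t$ $q$-exponent: expanding $(n'+ta)(e'+t)+(e'+t)^2/2$ must agree with the $t$-linear shift of $t^2(2a+1)/2+t(ae'+n'+e')$ produced by the restriction $m=k\ell+t$, which it does after using $n=a\ell$ and $2n\ell+\ell^2=\ell^2(2a+1)$.
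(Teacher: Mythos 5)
Your proposal is correct and follows essentially the same route as the paper, whose proof consists only of the one-line remark that the identities follow from standard results on sums over $\ell$-th roots of unity; you have simply written out the finite Fourier transform argument in full, including the reindexing $m=k\ell+s$ and the verification of the $q$-exponents, all of which checks out.
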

 \begin{proof} The statement follows using results on sums over $\ell-$th roots of unity.
 \eop \end{proof}
So it suffices to deduce the modular transformation properties of $\chi_{\mathsf{A}^{a,1}_{n',0}}(u,v+s/\ell;\tau)$.

\begin{lemma}\label{lm:trafoatyp}
Let $-\ell/2\leq s,t <\ell/2$ and define
\begin{equation}\label{weightlabels}
a_m(x)\ = \ \frac{a(a-m)}{2a+1}+\frac{a}{2}+ix(a+1) \qquad,\qquad e_m(x) \ = \ \frac{(a-m)}{2a+1}+\frac{1}{2}-ix
\end{equation}
for $x\in\R$. Then
\begin{equation*}
\begin{split}
\chi_{\mathsf{A}^{a,1}_{t/\ell,0}}\left(\frac{u}{\tau},\frac{v}{\tau}+\frac{s}{\ell};-\frac{1}{\tau}\right) \ &= \
 e^{2\pi i uv/\tau}\Bigl[\chi_{\mathsf{A}^{a,1}_{s/\ell,0}}\Bigl(u,v-\frac{t}{\ell};\tau\Bigr)+\\
 &\qquad +\frac{1}{2}\sum_{m=0}^{2a}\int_{\mathbb R}dx\frac{\chi_{\mathsf{T}^{a,1}_{a_{m-s/\ell}(x),e_{m-s/\ell}(x)}}\Bigl(u,v-\frac{t}{\ell};\tau\Bigr)}{\text{cosh}\Bigl(\pi\Bigl(x+i\frac{a-m+\frac{s}{\ell}}{2a+1}\Bigr)\Bigr)}\ \ \Bigr]
\end{split}
\end{equation*} 
\end{lemma}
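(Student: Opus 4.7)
The plan is to reduce the S-transformation to that of the level $K:=2a+1$ Appell-Lerch sum $A_K$ via Corollary \ref{trafosAK}, and then to reinterpret the resulting Mordell integrals as continuous superpositions of typical characters. First I will use the remark preceding Proposition \ref{AKinA1} to rewrite
\begin{equation*}
 \chi_{\mathsf{A}^{a,1}_{t/\ell,0}}\bigl(u,v+\tfrac{s}{\ell};\tau\bigr) \ = \ -i\frac{\theta_1(u;\tau)}{\eta^3(\tau)}\, z^{t/\ell-a}\, A_{K}\bigl(u,v+au+\tfrac{s}{\ell}+\tau\bigl(\tfrac{t}{\ell}-a\bigr);\tau\bigr),
\end{equation*}
and then apply $u\mapsto u/\tau$, $v\mapsto v/\tau$, $\tau\mapsto -1/\tau$. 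Proposition \ref{trafoA1} governs the transformation of $\theta_1$, the standard identity $\eta(-1/\tau)=\sqrt{-i\tau}\,\eta(\tau)$ handles $\eta$, and \eqref{AKS2} handles $A_K$. The $\sqrt{-i\tau}$ and $\tau$ factors should cancel in the product, while the exponential prefactors $e^{\pi i u^2/\tau}$ and $e^{-\pi i (Ku^2-2\tilde v u)/\tau}$ (with $\tilde v$ the shifted $v$-argument) combined with $z^{t/\ell-a}$ give the overall factor $e^{2\pi i uv/\tau}$ claimed in the statement.

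Second, I would observe that the ``main'' piece of \eqref{AKS2} is $A_K$ evaluated at the transformed variables, and it reassembles (using the same remark, but now with the roles of $s$ and $t$ exchanged) into $\chi_{\mathsf{A}^{a,1}_{s/\ell,0}}(u,v-t/\ell;\tau)$. For the correction sum over $m=0,\dots,K-1=2a$, I would apply Proposition \ref{trafoMordell} to each Mordell integral $h\bigl(Ku-\tilde v - m\tau + \tfrac{K-1}{2};K\tau\bigr)$ to rewrite it as $q^{\sigma_m^2/2}z^{\sigma_m}\,h_{\sigma_m}(\cdots;K\tau)$, choosing the shift parameter $\sigma_m\in[-1/2,1/2)$ so as to absorb the $m\tau$ contribution. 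A rescaling of the $x$-variable inside the integral defining $h_{\sigma_m}$ then produces the denominator $\cosh\bigl(\pi(x + i\tfrac{a-m+s/\ell}{2a+1})\bigr)$ that appears in the claim.

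Third, I would identify the full integrand as a typical character by comparing with \eqref{typchar} specialized to $\ell=1$: there, $\chi_{\mathsf{T}^{a,1}_{n',e'}}$ is $\theta_1(u;\tau)/\eta^3(\tau)$ times a monomial in $y,z,q$ times $\theta_1(\cdots;K\tau)$. The theta factor $\theta_1\bigl(v+m\tau-\tfrac{K-1}{2};K\tau\bigr)$ arising from \eqref{AKS2} plays the role of this ``big'' theta, while the integrand $q^{x^2/2}z^{ix}$ of $h_{\sigma_m}$ together with the $q^{\sigma_m^2/2}z^{\sigma_m}$ prefactor from Proposition \ref{trafoMordell} supplies the remaining exponential factors. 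Setting $n'=a_{m-s/\ell}(x)$ and $e'=e_{m-s/\ell}(x)$ as in \eqref{weightlabels} makes all exponents match, so that summation over $m$ and integration over $x$ yield the stated formula.

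The main obstacle will be the bookkeeping: tracking the quasi-periodic shifts of $\theta_1$ under the half-integer shifts of its arguments, the sign $(-1)^{\lfloor e'\rfloor}$ in $\chi_{\mathsf{T}^{a,1}_{n',e'}}$, and the correct reindexing $m\mapsto m-s/\ell$ that reconciles the sum $\sum_{m=0}^{K-1}$ in \eqref{AKS2} with the sum $\sum_{m=0}^{2a}$ in the claim. Verifying that the exponents of $z$, $y$ and $q$ arising from $q^{x^2/2}z^{ix}$, the $h_{\sigma_m}$ prefactor, and the theta factor together match precisely the exponents of $\chi_{\mathsf{T}^{a,1}_{a_m(x),e_m(x)}}$ with the labels in \eqref{weightlabels} is the central technical computation.
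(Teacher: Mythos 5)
Your proposal follows essentially the same route as the paper's proof: rewrite the character via \eqref{chiasAppell} as a multiple of $A_{2a+1}$, apply the $S$-transformation of the level $K$ Appell--Lerch sum from Corollary \ref{trafosAK} together with the $\theta_1$ and $\eta$ transformations, check that the exponential prefactors collapse to $e^{2\pi i uv/\tau}$ times the right power of $z$, reassemble the main term into $\chi_{\mathsf{A}^{a,1}_{s/\ell,0}}(u,v-t/\ell;\tau)$, and convert the Mordell-integral corrections via Proposition \ref{trafoMordell} into integrals over typical characters matched against \eqref{typchar} with the labels \eqref{weightlabels}. The bookkeeping you flag is exactly where the paper's remaining work lies, so the plan is sound and faithful to the original argument.
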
 
\begin{proof}
We let $K:=2a+1$. By \eqref{chiasAppell} we see that
\begin{equation}\label{trafochi1}
\begin{split}
& \chi_{\mathsf{A}^{a,1}_{t/\ell,0}}\left(\frac{u}{\tau},\frac{v}{\tau}+\frac{s}{\ell};-\frac{1}{\tau}\right) \ = \ i \frac{1}{\tau} e^{\pi i u^2/\tau}e^{2\pi i u (\frac{t}{\ell}-a)/\tau}\frac{\theta_1(u;\tau)}{\eta^3(\tau)}\times
\\
&\qquad\times  \tau e^{-\pi i (Ku^2 -2u(v+au+\tau(\frac{s}{\ell}-a)+a-\frac{t}{\ell}) )/\tau} \left[A_{K}\left(u,v+au+\tau(\frac{s}{\ell}-a)+a-\frac{t}{\ell};\tau\right)\right.+
\\
&\left. +\ \frac{i}{2} \sum_{m=0}^{2a} z^m \theta_1\left(v+au+\tau(\frac{s}{\ell}-a+m)-\frac{t}{\ell};K\tau\right)h\left(u(1+a)-v-\tau(\frac{s}{\ell}-a+m)+\frac{t}{\ell};K\tau\right)\right]
.
\end{split}
\end{equation}
We have
\begin{equation*}
 e^{-\pi i u^2/\tau}e^{2\pi i u (\frac{t}{\ell}-a)/\tau}e^{-\pi i (Ku^2 -2u(v+au+\tau(\frac{s}{\ell}-a)+a-\frac{t}{\ell}) )/\tau}\ = \ e^{2\pi i uv/\tau}z^{\frac{s}{\ell}-a}.
\end{equation*}
Thus, the expression in \eqref{trafochi1} simplifies to
\begin{equation*}
\begin{split}
&\chi_{\mathsf{A}^{a,1}_{t/\ell,0}}\left(\frac{u}{\tau},\frac{v}{\tau}+\frac{s}{\ell};-\frac{1}{\tau}\right) \ = \
e^{2\pi i uv/\tau} \chi_{\mathsf{A}^{a,1}_{s/\ell,0}}\left(u,v-\frac{t}{\ell};\tau\right)
 - \frac{1}{2} \frac{\theta_1(u;\tau)}{\eta^3(\tau)}e^{2\pi iuv/\tau}z^{\frac{s}{\ell}-a} \times
\\
&\times\sum_{m=0}^{2a} z^m \theta_1\left(v+au+\tau\left(\frac{s}{\ell}-a+m\right)-\frac{ts}{\ell};K\tau\right)h\left(u(1+a)-v-\tau\left(\frac{s}{\ell}-a+m\right)+\frac{t}{\ell};K\tau\right).
\end{split}
\end{equation*}
Using Proposition \ref{trafoMordell} we can rewrite the latter expression as the sum of integrals over the typical characters for the given parameters.\eop
\end{proof} 
We compute the modular properties of the typical characters. More specifically, we are interested in
\begin{equation*}
\chi_{\mathsf{T}^{a,1}_{a_{m-s/\ell}(x),e_{m-s/\ell}(x)}}(u,v;\tau)\ = \ x^k\frac{\theta_1(u;\tau)}{\eta^3(\tau)}y^{b-ix}
z^{ab+(a+1)ix}q^{\frac{K}{2}\bigl(x^2+b^2\bigr)}\theta_1(v+ua+Kb\tau;K\tau)
\end{equation*}
with parameters
\begin{equation*}
b\ = \ \frac{a-m+s/\ell}{K} \qquad,\qquad K\ =\ 2a+1\, .
\end{equation*}
We have 
\begin{lemma}\label{lm:trafotypchar}
Let $0\leq m\leq 2a$ and $-\frac{\ell}{2}\leq s\leq \frac{\ell}{2}$, then the typical character satisfies
\begin{align*}
&\chi_{\mathsf{T}^{a,1}_{a_{m-\frac{s}{\ell}}(x),e_{m-\frac{s}{\ell}}(x)}}\left(\frac{u}{\tau},\frac{v}{\tau}+\frac{t}{\ell};-\frac{1}{\tau}\right) \ = \
\\
&\quad\quad\quad \ = \
e^{2\pi i uv/\tau} \sum_{m'=0}^{K-1} \int_{\mathbb R}dw\,   e^{2\pi i A_{m'}(w)}\,
\chi_{\mathsf{T}^{a,1}_{a_{m'-\frac{t}{\ell}}(w),e_{m'-\frac{t}{\ell}}(w)}}\left(u,v-\frac{s}{\ell};\tau\right)
\end{align*}
where
\begin{equation}\label{def:Am}
\begin{split}
A_{m'}(w) \ &= \ -\frac{ixt}{\ell}-\frac{iws}{\ell}-wKx +
\frac{st}{\ell^2K}-\frac{(m'+1/2)(m1/2)}{K}+\frac{1}{4}\, .
\end{split}
\end{equation}
\end{lemma}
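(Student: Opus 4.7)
The plan is to obtain this formula by directly substituting $(u,v,\tau)\mapsto(u/\tau, v/\tau+t/\ell, -1/\tau)$ into the explicit expression for the typical character written just above the lemma, and then recasting the resulting expression as an integral over a continuous family of typical characters. Writing the typical character schematically as
\begin{equation*}
 \chi_{\mathsf{T}^{a,1}_{a_{m-s/\ell}(x),e_{m-s/\ell}(x)}}(u,v;\tau) \ = \ C_{m,s}\,\frac{\theta_1(u;\tau)}{\eta^3(\tau)}\,y^{b-ix}z^{ab+(a+1)ix}q^{\frac{K}{2}(x^2+b^2)}\,\theta_1(v+au+Kb\tau;K\tau),
\end{equation*}
with $K=2a+1$ and $b=(a-m+s/\ell)/K$, I would first apply Proposition \ref{trafoA1} to the quotient $\theta_1(u;\tau)/\eta^3(\tau)$, which contributes the factor $e^{\pi i u^2/\tau}/\tau$, and use the elementary algebraic transformations of $y,z,q$ under the substitution.

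The key technical step is the modular transformation of the level-$K$ theta factor $\theta_1(v+au+Kb\tau;K\tau)$ at $\tau\to-1/\tau$. I would expand $\theta_1(\,\cdot\,;K\tau)$ in its series form, split the summation index modulo $K$ into residues $m'\in\{0,\dots,K-1\}$ so as to obtain a decomposition into Jacobi theta series with characteristics at the rescaled modulus $\tau/K$, and apply the standard $\theta_1$ modular transformation componentwise. The discrete summation variable associated with each $m'$-residue class is then Poisson-summed against the Gaussian $q^{Kx^2/2}$ already present in the prefactor, producing the continuous integral $\int_{\R}dw$ that appears in the statement. This Poisson-summation-type step is the heart of the argument, and it is what converts the modular transform of a single typical character into an integral over the typical family labelled by $w$.

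After this manipulation, the quadratic form in $w$ reassembles into the Gaussian prefactor $q^{(K/2)(w^2+b'^2)}$ of a typical character whose weight-labels are $a_{m'-t/\ell}(w)$ and $e_{m'-t/\ell}(w)$ as defined in \eqref{weightlabels}: the imaginary part of the continuous parameter $w$ plays the same role as the original $x$, while the finite sum over $m'$ records the splitting of the level-$K$ theta factor. Combined with the $e^{\pi i u^2/\tau}/\tau$ collected from the $\theta_1/\eta^3$ transformation and the shift $v\to v/\tau + t/\ell$, the Gaussian integration delivers the global $e^{2\pi i uv/\tau}$ stated in the lemma, and all residual exponential factors collect into $e^{2\pi i A_{m'}(w)}$.

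The main obstacle I expect is not conceptual but bookkeeping: tracking all the Gaussian cross terms arising from the variables $(x,w,s,t,m,m',b)$ and verifying that, after subtracting off the pieces absorbed into $a_{m'-t/\ell}(w)$, $e_{m'-t/\ell}(w)$, and $e^{2\pi i uv/\tau}$, the remaining phase is exactly $2\pi A_{m'}(w)$ with $A_{m'}$ given by \eqref{def:Am}. Convergence is automatic since $\Im\tau>0$ makes every Gaussian absolutely integrable, and the interchange of summation and integration required for the Poisson step is justified at the level of formal characters.
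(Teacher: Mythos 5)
Your overall route is the paper's: substitute, transform $\theta_1/\eta^3$ via Proposition \ref{trafoA1}, break the level-$K$ theta factor into $K$ pieces indexed by $m'$, and reassemble the result as typical characters at modulus $\tau$. However, the mechanism you give for the appearance of the continuous integral $\int_{\R}dw$ is not correct, and as stated it would not produce that integral. Poisson summation of the integer index of $\theta_1(\,\cdot\,;K\tau)$ (equivalently, its modular $S$-transformation) converts one lattice sum into another lattice sum; together with the rescaling identity \eqref{eq:thetascale} this accounts only for the \emph{finite} sum over $m'=0,\dots,K-1$. The continuous integral has a different source: after the substitution, the factor carrying the continuous label $x$ of the typical family is the single Gaussian $\tilde{q}^{\frac{K}{2}x^2}\tilde{z}^{ix(a+1)}\tilde{y}^{-ix}$ with $\tilde{q}=e^{-2\pi i/\tau}$, and since there is no lattice sum in the $x$-direction, its re-expression at modulus $\tau$ is not a Poisson resummation but the Gauss (Fresnel) integral \eqref{eq:gauss}: up to the prefactor $e^{-\frac{\pi i}{\tau K}(v-(a+1)u)^2}$ that cancels against the transformed theta, one writes it as $\int_{\R}dw\,q^{\frac{K}{2}w^2}z^{iw(a+1)}y^{-iw}e^{-2\pi i Kwx}$. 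This is the step that turns one typical character into an uncountable family, and it must replace your ``Poisson-summed against the Gaussian $q^{Kx^2/2}$.''

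A secondary point of bookkeeping: you propose to split the index of the level-$K$ theta modulo $K$ \emph{before} transforming, claiming the components sit at modulus $\tau/K$. Splitting $n=Km+m'$ in a theta at modulus $-K/\tau$ produces components at modulus $-K^{3}/\tau$, not $\tau/K$; the paper instead first transforms $\theta_1(\,\cdot\,;-K/\tau)=\theta_1(\,\cdot\,;-1/(\tau/K))$ to a theta at modulus $\tau/K$ and only then applies \eqref{eq:thetascale} to split it into $K$ thetas at modulus $K\tau$. This ordering can be repaired and is equivalent in spirit, but as written your decomposition lands at the wrong modulus. With the Gauss-integral step supplied and the ordering fixed, the rest is indeed the phase bookkeeping you describe.
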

\begin{proof}
Let $\tilde{y}:=e^{2\pi i v/\tau}, \tilde{z}:=e^{2\pi i u/\tau}$, and $ \tilde{q}:=e^{-2\pi i /\tau}$.
We compute 
\begin{equation}\label{eq:Stypical}
\begin{split}
\chi_{\mathsf{T}^{a,1}_{a_{m-s/\ell}(x),e_{m-s/\ell}(x)}}\left(\frac{u}{\tau},\frac{v}{\tau}+\frac{t}{\ell};-\frac{1}{\tau}\right)\ &= \
\frac{1}{-i\tau}\frac{\theta_1(u;\tau)}{\eta^3(\tau)}\tilde{y}^{-ix}\tilde{z}^{ix(a+1)}\tilde{q}^{\frac{K}{2}x^2}e^{\frac{\pi i}{K\tau}(v+ua)^2}e^{\frac{\pi i}{\tau}u^2}\times\\
&\quad\times e^{2\pi x t/\ell}z^{\frac{at}{\ell K}} y^{\frac{t}{\ell K}}
 q^{\frac{t^2}{2K \ell^2}}e^{-\pi i Ku'^2/\tau}\theta_1\Bigl(\frac{K}{\tau}u';-\frac{K}{\tau}\Bigr)\, 
\end{split}
\end{equation}
with $$u'\ = \ \frac{v}{K}+\frac{ua}{K}-b+\frac{t\tau}{K\ell}\, .$$ 
Recall the standard Gauss integral 
\begin{equation}\label{eq:gauss}
\int_{\mathbb R} dx e^{-\alpha x^2+\beta x} \ = \ \sqrt{\frac{\pi}{\alpha}}e^{\beta^2/(4\alpha)}\, \quad \Re(\alpha)>0 .
\end{equation}
Thus, we have that
\begin{equation*}
\frac{1}{\sqrt{-i\tau K}}\tilde{y}^{-ix}\tilde{z}^{ix(a+1)}\tilde{q}^{\frac{K}{2}x^2} \ = \ e^{-\frac{\pi i}{\tau K}(v-(a+1)u)^2}\int_{\mathbb R} dw\,q^{\frac{K}{2}w^2}z^{iw(a+1)}y^{-iw}e^{-2\pi i wKx} \,  .
\end{equation*}
Together with 
\begin{equation}\label{eq:thetascale}
\begin{split}
 \theta_1\left(u;\frac{\tau}{K}\right)\ = \ \sum_{n=0}^{K-1} & q^{(n-(K-1)/2)^2/2K}e^{2\pi i (n-(K-1)/2)(u+1/2)}\times 
\\
& \qquad\qquad\times\theta_1\left(Ku+\tau\left(n-\frac{K-1}{2}\right)+\frac{K-1}{2};K\tau\right)
\end{split}
\end{equation}
this implies
\begin{equation*}
\begin{split}
\chi_{\mathsf{T}^{a,1}_{a_{m-\frac{s}{\ell}}(x),e_{m-\frac{s}{\ell}}(x)}}&\left(\frac{u}{\tau},\frac{v}{\tau}+\frac{t}{\ell};-\frac{1}{\tau}\right)\ = \\
&=\ e^{2\pi i \frac{uv}{\tau}} \sum_{m'=0}^{K-1} \int_{\mathbb R}dw\,   e^{2\pi i A_{m'}(w)}\,
\chi_{\mathsf{T}^{a,1}_{a_{m'-\frac{t}{\ell}}(w),e_{m'-\frac{t}{\ell}}(w)}}\left(u,v-\frac{s}{\ell};\tau\right)
\end{split}
\end{equation*}
where we changed summation to $m'=2a-n$ in \eqref{eq:thetascale}. Here, $A_{m'}(w)$ is given as in \eqref{def:Am}.
\eop
\end{proof}


Define the sets 
\begin{equation*}
\begin{split}
S\ &=\ \{s\in\Z|-\ell/2\leq s<\ell/2\}\quad\text{and}\quad
 M\ =\ \left\{m\in\frac{1}{\ell}\Z|-\frac{\ell}{2}(2a+1)<m\leq \frac{\ell}{2}(2a+1)\right\}\, .
 \end{split}
 \end{equation*}

\begin{theorem}\label{thm:modprop}
Let $s,s',t,t'\in S$, $r,c\in M$, $x\in\R$ and define the {\emph{S-matrices}}
\begin{equation}\label{Scompl}
\begin{split}
S_{(t/\ell,t')}^{(s/\ell,s')}\ &= \ \frac{1}{\ell}\xi_\ell^{-t's-s't}\, , \\
S_{(t/\ell,t')}^{(a_r(x),e_r(x))}\ &= \ \frac{1}{2\ell}\frac{e^{2\pi i (t'r-e_r(x)t/\ell)}}{\text{sin}(\pi e_r(x))}\, , \\
S_{(a_r(x),e_r(x))}^{(a_c(w),e_c(w))}\ &= \ \frac{1}{\ell}(-1)^{\lfloor e_r(x)\rfloor+\lfloor e_c(w)\rfloor}e^{2\pi i (e_r(x)Ke_c(w)+e_r(x)(c-2a-1/2)+e_c(w)(r-2a-1/2)+1/4)}\, .
\end{split}
\end{equation}
Then we have the following modular transformation properties
\begin{equation*}
\begin{split}
\chi_{\mathsf{A}^{n,\ell}_{t/\ell,t'}}\left(\frac{u}{\tau},\frac{v}{\tau};-\frac{1}{\tau}\right)\ &= \ e^{2\pi i uv/\tau} 
\Bigl[ \sum_{s,s'\,\in\, S} S_{(t/\ell,t')}^{(s/\ell,s')}\chi_{\mathsf{A}^{n,\ell}_{s/\ell,s'}}(u,v;\tau) +\\
&\qquad +\sum_{r\in M}\int_{\mathbb R}dx\, S_{(t/\ell,t')}^{(a_r(x),e_r(x))} \chi_{\mathsf{T}^{n,\ell}_{a_{r}(x),e_{r}(x)}}\Bigl(u,v;\tau\Bigr)\ \ \Bigr]\, , \\
\chi_{\mathsf{A}^{n,\ell}_{t/\ell,t'}}(u,v;\tau+1)\ &= \ \xi_{\ell}^{tt'}\chi_{\mathsf{A}^{n,\ell}_{t/\ell,t'}}(u,v;\tau)\, , \\
\chi_{\mathsf{T}^{n,\ell}_{a_r(x),e_r(x)}}\left(\frac{u}{\tau},\frac{v}{\tau};-\frac{1}{\tau}\right)\ &= \ e^{2\pi i uv/\tau} \sum_{c\in M}\int_\R dw\,  S_{(a_r(x),e_r(x))}^{(a_c(w),e_c(w))}
\chi_{\mathsf{T}^{n,\ell}_{a_c(w),e_c(w)}}(u,v;\tau)\, , \\
\chi_{\mathsf{T}^{n,\ell}_{a_r(x),e_r(x)}}(u,v;\tau+1)\ &= \ e^{\pi i\left(\frac{(a-r)^2}{K^2}+(a-r)+Kx^2+\frac{a}{2}+\frac14\right)}\chi_{\mathsf{T}^{n,\ell}_{a_r(x),e_r(x)}}(u,v;\tau)\, . \\
\end{split}
\end{equation*}
\end{theorem}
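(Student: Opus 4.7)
The plan is to reduce everything to the three ingredients already at our disposal: the decomposition of $\mathfrak W_{n,\ell}$-characters into level-$1$ characters shifted by $\ell$-th roots of unity (Lemma \ref{chiunity}), the modular transformation of $\chi_{\mathsf{A}^{a,1}_{t/\ell,0}}(u,v+s/\ell;\tau)$ (Lemma \ref{lm:trafoatyp}), and the modular transformation of $\chi_{\mathsf{T}^{a,1}_{a_{m-s/\ell}(x),e_{m-s/\ell}(x)}}(u,v+t/\ell;\tau)$ (Lemma \ref{lm:trafotypchar}). The four formulas then come out by a direct, if bookkeeping-heavy, calculation.

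First I would handle the $T$-transformations, which are the easy cases. Since $\tau\mapsto\tau+1$ only multiplies each monomial $q^{\Delta}$ by $e^{2\pi i\Delta}$ (and $\eta^3$, $\theta_1$ pick up their standard eighth roots of unity, cf.\ Proposition \ref{trafoA1}), one reads the phases off the explicit character formulas in Section 2. For $\chi_{\mathsf{A}^{n,\ell}_{t/\ell,t'}}$ one checks that the infinite series is invariant (all $q$-exponents shift by integers) and the overall phase comes from the conformal weight of the highest-weight state, yielding $\xi_\ell^{tt'}$. For $\chi_{\mathsf{T}^{n,\ell}_{a_r(x),e_r(x)}}$, using formula \eqref{typchar} and the parametrization \eqref{weightlabels}, the phase $e^{\pi i((a-r)^2/K^2 + (a-r) + Kx^2 + a/2 + 1/4)}$ drops out after the $\theta_1$ and $\eta^3$ contributions are combined.

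For the $S$-transformation of $\chi_{\mathsf{A}^{n,\ell}_{t/\ell,t'}}$, I would first apply the first identity of Lemma \ref{chiunity} to write
\[
\chi_{\mathsf{A}^{n,\ell}_{t/\ell,t'}}\left(\tfrac{u}{\tau},\tfrac{v}{\tau};-\tfrac{1}{\tau}\right) \ = \ \frac{1}{\ell}\sum_{s\in I}\xi_\ell^{-t's}\chi_{\mathsf{A}^{a,1}_{t/\ell,0}}\left(\tfrac{u}{\tau},\tfrac{v}{\tau}+\tfrac{s}{\ell};-\tfrac{1}{\tau}\right),
\]
then apply Lemma \ref{lm:trafoatyp} to each summand, and finally use the inverse identities of Lemma \ref{chiunity} to re-expand the level-one atypical and typical characters on the right-hand side back into $\mathfrak W_{n,\ell}$-characters. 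The atypical contribution produces a double sum over $s,s'\in S$ whose coefficient is $\ell^{-2}\sum_{s\in I}\xi_\ell^{-t's-s't}$, which collapses to $S^{(s/\ell,s')}_{(t/\ell,t')}$ after the $s$-sum is recognized as $\ell\cdot[\text{identity on roots of unity}]$. The typical contribution, after the $\cosh^{-1}$-weighted integrals are rewritten using $\text{cosh}(\pi e_r(x))=\sin(\pi e_r(x))/\cdots$-type manipulation, gives the claimed $S^{(a_r(x),e_r(x))}_{(t/\ell,t')}$. The $S$-transformation of the typical character is completely analogous: start from the second identity in Lemma \ref{chiunity} for typicals, apply Lemma \ref{lm:trafotypchar} termwise, and re-collect using the first identity of the typical part of Lemma \ref{chiunity}; the exponent $A_{m'}(w)$ combined with the $\ell$-th root of unity sums and the character phase $(-1)^{\lfloor e\rfloor}$ should reassemble into the asserted $S^{(a_c(w),e_c(w))}_{(a_r(x),e_r(x))}$.

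The main obstacle will be the bookkeeping of phases, not conceptual: matching the factor $\frac{1}{\cosh(\pi(x + i(a-m+s/\ell)/K))}$ from Lemma \ref{lm:trafoatyp} with the trigonometric denominator $\sin(\pi e_r(x))$ appearing in $S^{(a_r(x),e_r(x))}_{(t/\ell,t')}$, and verifying that the quadratic-in-$e_r(x),e_c(w)$ exponent produced by $A_{m'}(w)$ together with the shifts in \eqref{weightlabels} agrees with $2\pi i\bigl(e_r(x)Ke_c(w)+e_r(x)(c-2a-\tfrac12)+e_c(w)(r-2a-\tfrac12)+\tfrac14\bigr)$. Both checks are straightforward but tedious; the key simplifications are $\sin(\pi e_r(x))=\cos(\pi(e_r(x)-1/2))$ and that the shifted $\cosh$ from Lemma \ref{lm:trafoatyp} equals $i\sin\pi e_{m-s/\ell}(x)$ up to the phase $e^{\mp\pi e_{m-s/\ell}(x)}$ which is absorbed into the exponential $e^{2\pi i(t'r-e_r(x)t/\ell)}$ once the $s$-sum is performed.
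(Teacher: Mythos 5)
Your proposal follows the paper's proof exactly: the $T$-transformations are read off directly, and the $S$-transformations are obtained by applying Lemma \ref{chiunity}, then Lemma \ref{lm:trafoatyp} (resp.\ Lemma \ref{lm:trafotypchar}), and then the reverse direction of Lemma \ref{chiunity}, together with the periodicity of $\cosh$ and the relabelling of summation indices $r=m-s/\ell-s'K$ and $c=m'-t/\ell-t'K$. One small correction to your final bookkeeping remark: the shifted hyperbolic cosine satisfies $\cosh\bigl(\pi\bigl(x+i\tfrac{a-r}{K}\bigr)\bigr)=\sin(\pi e_r(x))$ exactly, with no extra factor of $i$ and no residual phase $e^{\mp\pi e_r(x)}$ to absorb.
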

\begin{proof}
The formulas for the $T$-transformations follow directly. For the $S$-transformations we first employ Lemma \ref{chiunity} and then use Lemma \ref{lm:trafoatyp} respectively Lemma \ref{lm:trafotypchar}. The ``reverse'' application of Lemma \ref{chiunity} yields the results where we use the periodicity of $\cosh$ and change the summation index to $r:=m-s/\ell-s'K$ for the formula for $\chi_{\mathsf{A}^{n,\ell}_{t/\ell,t'}}\left(\frac{u}{\tau},\frac{v}{\tau};-\frac{1}{\tau}\right)$. For $\chi_{\mathsf{T}^{n,\ell}_{a_r(x),e_r(x)}}\left(\frac{u}{\tau},\frac{v}{\tau};-\frac{1}{\tau}\right)$ we set
\[
r \ = \ m-s/\ell-s'K\qquad\text{and}\qquad   c\ = \ m'-t/\ell-t'K.
\]
Again, the repeated use of Lemma \ref{chiunity} together with a change of the summation index yields the result.
\eop
\end{proof}
\begin{remark}
The S-matrices satisfy the following periodicity conditions
\begin{equation*}
\begin{split}
S_{(t/\ell,t')}^{(s/\ell,s')}\ &= \ S_{(t/\ell,t'+m\ell)}^{(s/\ell,s'+m'\ell)}\, , \\
S_{(t/\ell,t')}^{(a_r(x),e_r(x))}\ &= \ S_{(t/\ell,t'+m\ell)}^{(a_{r+m'\ell(2a+1)}(x),e_{r+m'\ell(2a+1)}(x))}\ = \ S_{(t/\ell,t'+m\ell)}^{(a_r(x)+m'n,e_r(x)+m'\ell)}\, , \\
S_{(a_r(x),e_r(x))}^{(a_c(w),e_c(w))}\ &= \ S_{(a_{r+m'\ell(2a+1)}(x),e_{r+m'\ell(2a+1)}(x))}^{(a_{c+m\ell(2a+1)}(w),e_{c+m\ell(2a+1)}(w))}\ = \ 
S_{(a_r(x)+m'n,e_r(x)+m'\ell)}^{(a_c(x)+mn,e_c(x)+m\ell)}\, ,
\end{split}
\end{equation*}
for $m,m'\in\Z$. These periodicities nicely correspond to the character identities \eqref{periodicity}.
\end{remark}

\section{The Ring of Characters}

In rational conformal field theories the modular transformation properties induce a ring structure on the vector space of characters via the Verlinde formula.
This ring structure gives the Grothendieck ring of characters \cite{H}.

In this section, we explain how to use the mock modular transformation properties in Theorem \ref{thm:modprop} to provide the vector space of characters with a product structure.
We show that this product coincides with the corresponding product in the Grothendieck ring of characters if and only if Conjecture \ref{conj} holds. 
While the use of the Verlinde formula is straight-forward in rational conformal field theories, we have to deal with two subtleties here. Namely, we have to analytically continue the
S-matrix and we have to view characters of atypical modules as (regularized) sums of characters of typical ones. Similar techniques were applied in the computation of the Verlinde formula of $\AKMSA{gl}{1}{1}$ \cite{CR1} and a commutant subalgebra of one of our theories \cite{CR4}.

We are interested in characters with real weight labels, while the modular transformations of last section involved not necessarily real weights.
Hence, we define the {\emph{S-matrix}} as an analytic continuation to real weight labels of \eqref{Scompl}.
Recall that $a_r(x)=2a-r-e_r(x)(a+1)+1/2$ \eqref{weightlabels}.
\begin{definition}
Let $s,s',t,t'\in S$, $r,r'\in M$, $e,e'\in\R$ and let $m=2a-r,m'=2a-r'$, then we define
\begin{equation*}
\begin{split}
{S_{aa}}_{(t/\ell,t')}^{(s/\ell,s')}\ &= \ \frac{1}{\ell}\xi_\ell^{-t's-s't}\, , \\
{S_{at}}_{(t/\ell,t')}^{(m-(a+1)e+1/2,e)}\ &= \ \frac{1}{2\ell}\frac{e^{2\pi i (t'm-et/\ell)}}{\text{sin}(\pi e)}\, , \\
{S_{tt}}_{(m-(a+1)e+1/2,e)}^{(m'-(a+1)e'+1/2,e')}\ &= \ \frac{1}{\ell}(-1)^{\lfloor e\rfloor+\lfloor e'\rfloor}e^{2\pi i (eKe'-e(m'-1/2)-e'(m-1/2)+1/4)}\, .
\end{split}
\end{equation*}
We call $S_{{tt}}$ the \emph{typical} (and $S_{{aa}}$ the \emph{atypical}) {\emph{S-matrix}}.
\end{definition}
\begin{proposition}
The typical and atypical S-matrices are symmetric 
\begin{equation*}
{S_{tt}}_{(m-(a+1)e+1/2,e)}^{(m'-(a+1)e'+1/2,e')}\ = \ {S_{tt}}_{(m'-(a+1)e'+1/2,e')}^{(m-(a+1)e+1/2,e)}\qquad,\qquad
{S_{aa}}_{(t/\ell,t')}^{(s/\ell,s')} \ = \ {S_{aa}}_{(s/\ell,s')}^{(t/\ell,t')}
\end{equation*}
and unitary
\begin{equation*}
\begin{split}
\sum_{m'\in M}\int de'\,
{S_{tt}}_{(m-(a+1)e+1/2,e)}^{(m'-(a+1)e'+1/2,e')}{\overline{S}_{tt}}_{(m'-(a+1)e'+1/2,e')}^{(m''-(a+1)e''+1/2,e'')}\ &= \ \delta_{m,m''}\delta(e-e'')\, ,\\
\sum_{s',s\in S} {S_{aa}}_{(t/\ell,t')}^{(s/\ell,s')}{\overline{S}_{aa}}_{(s/\ell,s')}^{(r/\ell,r')} \ &= \ \delta_{t,r}\delta_{t',r'}\, .
\end{split}
\end{equation*}
The bar denotes complex conjugation.
\end{proposition}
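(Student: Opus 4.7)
The symmetry statements follow by direct inspection. For $S_{aa}$, the phase $\xi_\ell^{-(t's+s't)}$ is manifestly symmetric under $(s,s')\leftrightarrow(t,t')$; for $S_{tt}$, both the exponent $eKe'-e(m'-\tfrac12)-e'(m-\tfrac12)+\tfrac14$ and the sign $(-1)^{\lfloor e\rfloor+\lfloor e'\rfloor}$ are symmetric under the exchange $(m,e)\leftrightarrow(m',e')$.

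For the unitarity of $S_{aa}$, I would simply compute
\[
\sum_{s,s'\in S}{S_{aa}}^{(s/\ell,s')}_{(t/\ell,t')}\,\overline{{S_{aa}}^{(r/\ell,r')}_{(s/\ell,s')}}
\ =\ \frac{1}{\ell^2}\Bigl(\sum_{s\in S}\xi_\ell^{s(r'-t')}\Bigr)\Bigl(\sum_{s'\in S}\xi_\ell^{s'(r-t)}\Bigr).
\]
Since $S$ is a complete residue system modulo $\ell$ and $|t-r|,|t'-r'|<\ell$, the standard character orthogonality $\sum_{s\in S}\xi_\ell^{sk}=\ell\,\delta_{k\equiv 0\,(\bmod\,\ell)}$ applied to each factor yields exactly $\delta_{t,r}\,\delta_{t',r'}$.

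For the unitarity of $S_{tt}$, I would form $S_{tt}\,\overline{S_{tt}}$, using $(-1)^{2\lfloor e'\rfloor}=1$ to remove the $e'$-dependent sign. The resulting exponent is linear in $e'$ and $m'$:
\[
2\pi i\,\bigl(e'\,[K(e-e'')-(m-m'')]\ +\ (m'-\tfrac12)(e''-e)\bigr).
\]
The integral over $e'\in\R$ produces $\delta(K(e-e'')-(m-m''))=K^{-1}\,\delta(e-e''-\tfrac{m-m''}{K})$. For the finite sum $\sum_{m'\in M}e^{2\pi i(e''-e)(m'-\frac12)}$, I would write $m'=p'/\ell$ with $p'$ running over $\ell^2K$ consecutive integers, set $r=e^{2\pi i(e''-e)/\ell}$, and observe that the geometric factor $r^{\ell^2K}=e^{2\pi i(e''-e)\ell K}$ evaluates on the Dirac delta support to $e^{-2\pi i(m-m'')\ell}=1$ since $(m-m'')\ell\in\Z$. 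Hence the finite sum vanishes exactly whenever $r\neq 1$; inside a single $m$-fundamental domain of length $\ell K$ dictated by the preceding Remark, this is precisely the case $m\neq m''$. For $m=m''$ the sum equals $\ell^2K$, and assembling $\ell^{-2}\cdot\ell^2K\cdot K^{-1}=1$ together with $(-1)^{\lfloor e\rfloor+\lfloor e''\rfloor}=+1$ on the diagonal $e=e''$ gives the desired $\delta_{m,m''}\,\delta(e-e'')$.

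The main obstacle is the bookkeeping of the distributional identity: one must use the periodicity $m\mapsto m+\ell K$ from the preceding Remark to restrict $m,m''$ to a single fundamental domain so that $(m-m'')\ell$ is an integer bounded in absolute value by $\ell^2K$. Only then does the exact vanishing $r^{\ell^2K}-1=0$ combine with the discrete zero structure of $r-1$ to produce the Kronecker $\delta_{m,m''}$ rather than a spread-out Dirac comb. Once this fundamental-domain issue is handled correctly, the remainder is a routine normalization check.
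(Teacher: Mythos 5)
Your proposal is correct and follows essentially the same route as the paper: symmetry by inspection, unitarity of $S_{aa}$ via factorized root-of-unity sums over the complete residue system $S$, and unitarity of $S_{tt}$ by first integrating over $e'$ to produce the Dirac delta constraint $K(e-e'')=m-m''$ and then summing the resulting roots of unity over $m'\in M$ to force $m=m''$, with the residual phase $e^{\pi i(e-e'')}(-1)^{\lfloor e\rfloor+\lfloor e''\rfloor}$ collapsing to $1$ on the support. Your explicit bookkeeping of the normalization $\ell^{-2}\cdot\ell^{2}K\cdot K^{-1}=1$ and of the fundamental-domain bound $|m-m''|<\ell K$ is more careful than the paper's appeal to ``standard results on roots of unity,'' but it is the same argument.
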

\begin{proof}
The first statement on the symmetry is clear. For the second statement observe that
\begin{align}\label{comp:unitary}
\sum_{m'\in M}\int de'\,
\notag&{S_{tt}}_{(m-(a+1)e+1/2,e)}^{(m'-(a+1)e'+1/2,e')}{\overline{S}_{tt}}_{(m'-(a+1)e'+1/2,e')}^{(m''-(a+1)e''+1/2,e'')} \, =
\\
&\quad =\frac{1}{\ell^2} \sum_{m'\in M}\int de'\, (-1)^{\lfloor e\rfloor+\lfloor e''\rfloor} e^{2\pi i e'(eK-e''K-m+m'')}e^{2\pi i m'(e''-e)}e^{\pi i (e-e'')}.
\end{align}
Now 
\[
\int de'\,e^{2\pi i e'(eK-e''K-m+m'')}=0
\]
unless $eK-e''K=m''-m$. So we can consider 
\[
\sum_{m'\in M} e^{2\pi i m'(m-m'')/K},
\]
which equals $\delta_{m,m''}$ by standard results on roots of unity.
Then the delta distribution can only be non-zero if $e-e'\in\Z$  in which case the phase $e^{\pi i(e-e')}(-1)^{\lfloor e\rfloor+\lfloor e'\rfloor }$
becomes one. 
 So \eqref{comp:unitary} equals
\[
\delta_{m,m''}\delta(e-e'').
\]

Moreover, we have
\begin{align*}
\sum_{s',s\in S} {S_{aa}}_{(t/\ell,t')}^{(s/\ell,s')}{\overline{S}_{aa}}_{(s/\ell,s')}^{(r/\ell,r')} &=\frac{1}{\ell^2} \sum_{s',s\in S}  \xi_{\ell}^{-t's-s't}\bar{\xi_{\ell}^{-s'r-r's}}
\\
&= \frac{1}{\ell^2} \sum_{s',s\in S} e^{\frac{2\pi i }{\ell} (s(r'-t')+s'(r-t))}.
\end{align*}
Using standard results on roots of unity we see that this equals $\delta_{t,r}\delta_{t',r'}$.
\eop
\end{proof}
\begin{definition}
Define the structure constants for $t,t'\in S$,$e,e'\in\R$ and $m,m'\in M$ as
\begin{equation*}
\begin{split}
&{N_{(t/\ell,t'),(m-(a+1)e+1/2,e)}}^{(m'-(a+1)e'+1/2,e')}\ = \\
 &\qquad\qquad= \ \sum_{k\in M}\int_\R d x\, 
\frac{{S_{at}}_{(t/\ell,t')}^{(k-(a+1)x+1/2,x)}{S_{tt}}_{(m-(a+1)e+1/2,e)}^{(k-(a+1)x+1/2,x)}{\overline{S}_{tt}}_{(k-(a+1)x+1/2,x)}^{(m'-(a+1)e'+1/2,e')}}{{S_{at}}_{(0,0)}^{(k-(a+1)x+1/2,x)}}\,.
\end{split}
\end{equation*}
\end{definition}
\begin{proposition}
The structure constants satisfy
\begin{equation*}
{N_{(t/\ell,t'),(m-(a+1)e+1/2,e)}}^{(m'-(a+1)e'+1/2,e')}\ = \ \delta(e-e'+(m'-m-t/\ell)/K)\delta_{m'=m+t/\ell+t'K\,\text{mod}\,\ell K}\, .
\end{equation*}
\end{proposition}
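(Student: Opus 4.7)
The plan is to substitute the explicit formulas for $S_{at}$ and $S_{tt}$ from the Definition preceding the Proposition into the defining sum for the structure constants and exploit that every dependence on the summation variable $k\in M$ and the integration variable $x\in\R$ factorises cleanly: the $x$-integral then yields a Dirac delta and the $k$-sum a Kronecker delta, while the remaining phases collapse to unity on the joint support of the two deltas.

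First I would form the quotient $S_{at}/S_{at,(0,0)}$. The only $(k,x)$-dependence of the denominator is the factor $\sin(\pi x)$, which cancels against the numerator, and the quotient reduces to the pure phase $e^{2\pi i(t'k-xt/\ell)}$. Next I would compute the product ${S_{tt}}\cdot\overline{S}_{tt}$: the two signs $(-1)^{2\lfloor x\rfloor}$ collapse to $1$, the additive constants $1/4$ cancel, and the result, multiplied by the phase just obtained, groups into a single exponential in $x$ times a single exponential in $k$,
\[
e^{2\pi ix[K(e-e')+m'-m-t/\ell]}\cdot e^{2\pi ik[(e'-e)+t']},
\]
multiplied by the overall prefactor $\ell^{-2}(-1)^{\lfloor e\rfloor+\lfloor e'\rfloor}e^{-\pi i(e'-e)}$.

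Integration over $x\in\R$ then produces $K^{-1}\delta\bigl(e-e'+(m'-m-t/\ell)/K\bigr)$, which pins $e'-e$ to $(m'-m-t/\ell)/K$. To evaluate the sum over $k\in M$ I would parametrise $k=j/\ell$ with $j$ ranging over $\ell^{2}K$ consecutive integers; on the support of the Dirac delta the $k$-exponent becomes $2\pi ij(m'-m-t/\ell+t'K)/(\ell K)$, and the standard roots-of-unity identity (the finite geometric sum telescopes to zero unless the ratio is $1$) returns $\ell^{2}K$ precisely when $m'-m-t/\ell+t'K\in\ell K\,\Z$, equivalently when $m'\equiv m+t/\ell+t'K\pmod{\ell K}$, and $0$ otherwise, producing the Kronecker delta in the statement.

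Finally I would verify that the remaining factors assemble to unity. The numerical prefactors combine as $\ell^{-2}\cdot K^{-1}\cdot\ell^{2}K=1$. On the joint support of both deltas the number $(m'-m-t/\ell)/K$ equals $t'+\ell p$ for some $p\in\Z$, hence $e'-e$ is an integer, $\lfloor e'\rfloor-\lfloor e\rfloor=e'-e$, and the residual phase collapses as
\[
(-1)^{\lfloor e\rfloor+\lfloor e'\rfloor}\,e^{-\pi i(e'-e)}=(-1)^{e'-e}\cdot(-1)^{e'-e}=1.
\]
The principal point requiring care, and essentially the only real obstacle, is the careful bookkeeping of $|M|=\ell^{2}K$ and of the Jacobian $K^{-1}$ coming out of the Dirac delta so that their product is exactly $1$, together with the verification that all half-integer phases in the overall prefactor collapse to unity on the joint support of the two delta distributions.
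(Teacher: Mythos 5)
Your proposal follows exactly the route of the paper's own proof: substitute the explicit $S$-matrices, cancel the $\sin(\pi x)$ against ${S_{at}}_{(0,0)}$, collapse the signs and the $1/4$'s in $S_{tt}\overline{S}_{tt}$, let the $x$-integral produce $K^{-1}\delta\bigl(e-e'+(m'-m-t/\ell)/K\bigr)$, evaluate the $k$-sum over the $\ell^2K$ elements of $M$ as a root-of-unity sum, and check that $\ell^{-2}K^{-1}\cdot\ell^2K=1$ and that $e^{\pi i(e-e')}(-1)^{\lfloor e\rfloor+\lfloor e'\rfloor}=1$ on the support where $e-e'\in\Z$; all of this matches the paper. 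One point to fix: your final equivalence is stated backwards --- $m'-m-t/\ell+t'K\in\ell K\Z$ is equivalent to $m'\equiv m+t/\ell\mathbf{-}t'K\pmod{\ell K}$, not $+t'K$; the paper's own computation carries the $k$-exponent $e^{2\pi ik(e'-e-t')}$ (rather than your $e^{2\pi ik(e'-e+t')}$, which is what the printed definition of $S_{at}$ literally yields), and it is that sign which produces the stated $+t'K$ and agrees with the fusion rules, so you should either track the relabelling $r=2a-m$ in the analytic continuation of $S_{at}$ or note the sign discrepancy explicitly rather than absorbing it silently in the last step.
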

\begin{proof}
The statement follows from the following computation
\begin{equation*}
\begin{split}
&{N_{(t/\ell,t'),(m-(a+1)e+1/2,e)}}^{(m'-(a+1)e'+1/2,e')}\ = \\
 &\qquad\qquad= \ \frac{1}{\ell^2}\sum_{k\in M}\int_\R d x\, 
e^{2\pi i xK(e-e'+(m'-m-t/\ell)/K)}e^{2\pi ik(e'-e-t')}e^{\pi i(e-e')}(-1)^{\lfloor e\rfloor+\lfloor e'\rfloor }\\
&\qquad\qquad= \ \frac{1}{K\ell^2}\sum_{k\in M}\delta(e-e'+(m'-m-t/\ell)/K)e^{2\pi ik(e'-e-t')}e^{\pi i(e-e')}(-1)^{\lfloor e\rfloor+\lfloor e'\rfloor }\\
&\qquad\qquad= \ \frac{1}{K\ell^2}\sum_{k\in M}\delta(e-e'+(m'-m-t/\ell)/K)e^{2\pi i\frac{k}{K}(m'-m-t'K)}e^{\pi i(e-e')}(-1)^{\lfloor e\rfloor+\lfloor e'\rfloor }\\
&\qquad\qquad=\ \delta(e-e'+(m'-m)/K)\delta_{m'=m+t/\ell+t'K\,\text{mod}\,\ell K}\, .
\end{split}
\end{equation*}
The delta distribution can only be non-zero if $e-e'\in\Z$,  in which case the phase $e^{\pi i(e-e')}(-1)^{\lfloor e\rfloor+\lfloor e'\rfloor }$
becomes one. \eop
\end{proof}
Since $m,m'\in M$ and $t,t'\in S$, we may choose an $\epsilon\in\{-1,0,1\}$ such that $m+t/\ell+t'K+\epsilon\ell K\in M$.
We now define and compute an action of atypical characters on typical ones by
\begin{align}
\nonumber
\chi_{\mathsf{A}^{n,\ell}_{t/\ell,t'}}(u,v;\tau)\times \chi_{\mathsf{T}^{n,\ell}_{m-(a+1)e,e}}(u,v;\tau)\ &= \ \sum_{m'\in M}\int_\R de' {N_{(t/\ell,t'),(m-(a+1)e+1/2,e)}}^{(m'-(a+1)e'+1/2,e')}\\ \label{prodat}
&= \  \chi_{\mathsf{T}^{n,\ell}_{m+t/\ell-(a+1)e+t'a+\epsilon\ell  a,e+t'+\epsilon\ell}}(u,v;\tau)\\
&= \  \chi_{\mathsf{T}^{n,\ell}_{m+t/\ell-(a+1)e+t'a,e+t'}}(u,v;\tau)\, .\nonumber
\end{align}
For the last equality we used $a\ell=n$ and \eqref{periodicity}. Remark that this action extends to an action of the $\C$-span of atypical characters on the $\C$-span of typical characters by linearity.
In order to obtain a product structure on atypical characters we use the relation to typical ones \eqref{atyptyp}, namely
\begin{equation*}
\chi_{\mathsf{A}^{n,\ell}_{s/\ell,s'}}(u,v;\tau)-\chi_{\mathsf{A}^{n,\ell}_{s/\ell+m+1,s'}}(u,v;\tau) \ =\ \sum_{i=0}^m \chi_{\mathsf{T}^{n,\ell}_{s/\ell+i+s'a+1/2,s'}}(u,v;\tau)
\end{equation*}
for all $m>0,m\in\Z$
and combine this with \eqref{prodat} to get 
\begin{equation}\label{aam}
\begin{split}
\chi_{\mathsf{A}^{n,\ell}_{t/\ell,t'}}(u,v;\tau)\times (\chi_{\mathsf{A}^{n,\ell}_{s/\ell,s'}}(u,v;\tau)-&\chi_{\mathsf{A}^{n,\ell}_{s/\ell+m+1,s'}}(u,v;\tau)) \ =\\
 &= \ \sum_{i=0}^m \chi_{\mathsf{T}^{n,\ell}_{(s+t)/\ell+i+(s'+t')a+1/2,s'+t'}}(u,v;\tau)\\
 &= \ \chi_{\mathsf{A}^{n,\ell}_{(s+t)/\ell,s'+t'}}(u,v;\tau)-\chi_{\mathsf{A}^{n,\ell}_{(s+t)/\ell+m+1,s'+t'}}(u,v;\tau)\\
\end{split}
\end{equation}
We would like to take the limit $m\rightarrow\infty$. Here a regularization is needed. For $\epsilon>0$ we let
\begin{equation*}
\chi^\epsilon_{\mathsf{A}^{n,\ell}_{n',\ell'}}(u,v;\tau)\ = \ q^{\epsilon n'^2}\chi_{\mathsf{A}^{n,\ell}_{n',\ell'}}(u,v;\tau)\, .
\end{equation*}
The product of regularized characters
\begin{equation}\label{regprod}
\begin{split}
\chi^{\epsilon}_{\mathsf{A}^{n,\ell}_{t/\ell,t'}}(u,v;\tau)\times (\chi^{\epsilon}_{\mathsf{A}^{n,\ell}_{s/\ell,s'}}(u,v;\tau)-&\chi^{\epsilon}_{\mathsf{A}^{n,\ell}_{s/\ell+m+1,s'}}(u,v;\tau)) \ =\\
 &= \ \chi^{\epsilon}_{\mathsf{A}^{n,\ell}_{(s+t)/\ell,s'+t'}}(u,v;\tau)-\chi^{\epsilon}_{\mathsf{A}^{n,\ell}_{(s+t)/\ell+m+1,s'+t'}}(u,v;\tau)\\
\end{split}
\end{equation}
agrees with \eqref{aam} in the limit $\epsilon\rightarrow 0$.
The regularized character vanishes at infinity
\begin{equation*}
 \lim_{n'\rightarrow\infty}\chi^{\epsilon}_{\mathsf{A}^{n,\ell}_{n',\ell'}}(u,v;\tau)\ = \ 0\,\qquad \text{for\ all}\ \epsilon >0\, .
\end{equation*}
Hence, in the limit $m\rightarrow\infty$, the regularized product \eqref{regprod} of \eqref{aam} becomes 
\begin{equation*}
\begin{split}
\chi^{\epsilon}_{\mathsf{A}^{n,\ell}_{t/\ell,t'}}(u,v;\tau)\times \chi^{\epsilon}_{\mathsf{A}^{n,\ell}_{s/\ell,s'}}(u,v;\tau)\ = \ 
\chi^{\epsilon}_{\mathsf{A}^{n,\ell}_{(s+t)/\ell,s'+t'}}(u,v;\tau)\, .
\end{split}
\end{equation*}
We summarize
\begin{proposition}
The modular S-transformation gives the following products for $t,t',s,s'\in S,e\in\R,m\in M$
\begin{equation*}
\begin{split}
\chi_{\mathsf{A}^{n,\ell}_{t/\ell,t'}}(u,v;\tau)\times \chi_{\mathsf{T}^{n,\ell}_{m-(a+1)e,e}}(u,v;\tau)\ &= \
\chi_{\mathsf{T}^{n,\ell}_{m+t/\ell-(a+1)e+t'a,e+t'}}(u,v;\tau)\\
\chi^{\epsilon}_{\mathsf{A}^{n,\ell}_{t/\ell,t'}}(u,v;\tau)\times \chi^{\epsilon}_{\mathsf{A}^{n,\ell}_{s/\ell,s'}}(u,v;\tau)\ &= \ 
\chi^{\epsilon}_{\mathsf{A}^{n,\ell}_{(s+t)/\ell,s'+t'}}(u,v;\tau)\, .
\end{split}
\end{equation*}
\end{proposition}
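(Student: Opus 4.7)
The first identity is essentially a bookkeeping exercise once the structure constants are at hand. The plan is to substitute the evaluation of ${N_{(t/\ell,t'),(m-(a+1)e+1/2,e)}}^{(m'-(a+1)e'+1/2,e')}$ from the preceding proposition into the defining formula (\ref{prodat}). The continuous delta $\delta(e-e'+(m'-m)/K)$ pins $e'=e+(m'-m)/K$, while the discrete delta $\delta_{m'\equiv m+t/\ell+t'K\pmod{\ell K}}$ selects the unique representative $m'\in M$ of the form $m+t/\ell+t'K+\epsilon\ell K$ with $\epsilon\in\{-1,0,1\}$. Substituting back yields a single typical character; the cosmetic $\epsilon\ell$ shifts disappear by the periodicity (\ref{periodicity}) together with the identity $a\ell=n$.

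For the second identity the key ingredient is the relation (\ref{atyptyp}), which rewrites differences of shifted atypical characters as finite sums of typicals. The plan is to apply the first identity of the proposition termwise to such a difference and then telescope the resulting sum of typicals back into a difference of atypicals using (\ref{atyptyp}) once more; this yields precisely the intermediate identity (\ref{aam}). The next step is to eliminate the tail $\chi_{\mathsf{A}^{n,\ell}_{s/\ell+m+1,s'}}$ and its translate by sending $m\to\infty$. Since the atypical character does not vanish at infinity in its first label, the bare limit is ill-defined, and this is precisely where the Gaussian regulator $q^{\epsilon(n')^2}$ built into $\chi^{\epsilon}_{\mathsf{A}^{n,\ell}_{n',\ell'}}$ becomes necessary.

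The core analytic step will be to verify that $\lim_{n'\to\infty}\chi^{\epsilon}_{\mathsf{A}^{n,\ell}_{n',\ell'}}(u,v;\tau)=0$ for every $\epsilon>0$. Since $|q^{\epsilon(n')^2}|=e^{-2\pi\,\mathrm{Im}(\tau)\epsilon(n')^2}$ on $\tau\in\mathbb{H}$, the Gaussian prefactor dominates any growth coming from the explicit series representation of the atypical character, uniformly on compacta in $(u,v,\tau)$. Applying this to the regularized version of (\ref{aam}) and then letting $m\to\infty$ kills the boundary terms and leaves the claimed product. The first identity is compatible with the regularization in the trivial sense that $\chi^{\epsilon}_{\mathsf{A}}\to\chi_{\mathsf{A}}$ as $\epsilon\to 0$, so both formulas sit inside a single coherent product on the $\mathbb{C}$-span of characters. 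I expect the main obstacle to be this uniformity estimate --- controlling the denominators $1-zq^{m\ell+\ell'}$ in the series representation of the atypical character --- but it should follow from standard bounds on Appell-Lerch sums combined with the uniformly bounded prefactor $\theta_1(u;\tau)/\eta^3(\tau)$.
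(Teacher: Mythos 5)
Your proposal retraces the paper's own derivation essentially verbatim: the first product is read off by integrating the structure constants against typical characters as in \eqref{prodat}, with the residual $\epsilon\ell$ shift absorbed by the periodicity \eqref{periodicity} and $a\ell=n$, and the second is obtained by converting atypicals to finite sums of typicals via \eqref{atyptyp}, applying the first product termwise to get \eqref{aam}, inserting the regulator, and sending $m\to\infty$ using the vanishing of $\chi^{\epsilon}_{\mathsf{A}^{n,\ell}_{n',\ell'}}$ at large $n'$. The one place you add a justification the paper omits --- that the Gaussian regulator ``dominates any growth'' of the atypical character --- is actually more delicate than you suggest: completing the square in the $q$-exponent of the series for $\chi_{\mathsf{A}^{n,\ell}_{n',\ell'}}$ shows the unregularized character itself grows like $|q|^{-n'^2/(2(2a+1))}$, so the product only tends to zero for $\epsilon>\tfrac{1}{2(2a+1)}$ rather than for all $\epsilon>0$; this is a gap you share with the paper rather than introduce, but your claimed ``uniformity estimate'' would not close it as stated.
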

Comparing with Conjecture \ref{conj}, we observe
\begin{corollary}
Assume that Conjecture \ref{conj} is correct, i.e. that fusion multiplicities are at most one. 
Then the product structure from the modular transformations coincides with the corresponding product in the Grothendieck ring of characters
\begin{equation*}
\begin{split}
\chi_{\mathsf{A}^{n,\ell}_{t/\ell,t'}}(u,v;\tau)\times \chi_{\mathsf{T}^{n,\ell}_{m-(a+1)e,e}}(u,v;\tau)\ &= \
\chi_{\bigl(\mathsf{A}^{n,\ell}_{t/\ell,t'}\times_f \mathsf{T}^{n,\ell}_{m-(a+1)e,e}\bigr)}(u,v;\tau)\\
\chi^{\epsilon}_{\mathsf{A}^{n,\ell}_{t/\ell,t'}}(u,v;\tau)\times \chi^{\epsilon}_{\mathsf{A}^{n,\ell}_{s/\ell,s'}}(u,v;\tau)\ &= \ 
\chi^{\epsilon}_{\bigl(\mathsf{A}^{n,\ell}_{t/\ell,t'}\times_f\mathsf{A}^{n,\ell}_{s/\ell,s'}\bigr)}(u,v;\tau)\, .
\end{split}
\end{equation*}
\end{corollary}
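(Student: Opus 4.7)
The plan is to prove this corollary by straightforward comparison: the preceding Proposition gives explicit closed-form expressions for the products $\chi_{\mathsf{A}^{n,\ell}_{t/\ell,t'}}\times\chi_{\mathsf{T}^{n,\ell}_{m-(a+1)e,e}}$ and $\chi^{\epsilon}_{\mathsf{A}^{n,\ell}_{t/\ell,t'}}\times\chi^{\epsilon}_{\mathsf{A}^{n,\ell}_{s/\ell,s'}}$ induced by the modular $S$-transformation, while Conjecture \ref{conj} predicts specific indecomposable modules as the outputs of the fusion products. It therefore suffices to match labels.

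First, for the atypical-atypical case, Conjecture \ref{conj} yields
\begin{equation*}
\mathsf{A}^{n,\ell}_{t/\ell,t'}\times_f \mathsf{A}^{n,\ell}_{s/\ell,s'} \ = \ \mathsf{A}^{n,\ell}_{(s+t)/\ell,\, s'+t'},
\end{equation*}
whose character, after $\epsilon$-regularization, is precisely the right-hand side established in the preceding Proposition. The equality of regularized characters is immediate from the definition $\chi^{\epsilon}_{\mathsf{A}^{n,\ell}_{n',\ell'}}=q^{\epsilon n'^2}\chi_{\mathsf{A}^{n,\ell}_{n',\ell'}}$, since the index labels on both sides coincide.

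Second, for the atypical-typical case, setting $n'=t/\ell$, $\ell'=t'$, $n''=m-(a+1)e$ and $e''=e$ in the second formula of Conjecture \ref{conj} produces
\begin{equation*}
\mathsf{A}^{n,\ell}_{t/\ell,t'}\times_f \mathsf{T}^{n,\ell}_{m-(a+1)e,\,e} \ = \ \mathsf{T}^{n,\ell}_{t/\ell+m-(a+1)e+at',\, t'+e},
\end{equation*}
again matching the expression in the Proposition on the nose. Hence $\times$ and $\times_f$ agree at the level of characters under the assumption that the fusion multiplicities are at most one.

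No genuine obstacle is anticipated: the work is concentrated in the preceding Proposition (where the regularization scheme and the limit $m\to\infty$ of \eqref{aam} were handled) and in Conjecture \ref{conj}. The only subtlety worth flagging is the periodicity ambiguity parametrized by $\epsilon\in\{-1,0,1\}$ used in \eqref{prodat}; this is resolved by the identifications \eqref{periodicity} together with $a\ell=n$, so that the seemingly different labels $m+t/\ell-(a+1)e+t'a+\epsilon\ell a$ and $m+t/\ell-(a+1)e+t'a$ index the same $\mathfrak{W}_{n,\ell}$-module and therefore have the same character. With this identification in place, the corollary follows as a direct consequence of the Proposition together with Conjecture \ref{conj}.
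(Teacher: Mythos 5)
Your proposal is correct and matches the paper's treatment: the paper offers no separate argument for this corollary, simply observing that the labels produced by the preceding Proposition coincide with those predicted by Conjecture \ref{conj}, which is exactly the label-matching you carry out. Your additional remark about the $\epsilon\in\{-1,0,1\}$ periodicity shift is a detail already absorbed into the statement of the Proposition via \eqref{periodicity} and $a\ell=n$, so it is harmless but not needed here.
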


\end{document}